\newtheorem{theorem}{Theorem}[section]
\newtheorem{lemma}[theorem]{Lemma}
\newtheorem{proposition}[theorem]{Proposition}
\newtheorem{claim}[theorem]{Claim}
\theoremstyle{definition}
\numberwithin{equation}{section}
\begin{document}

%%%%% To ease editing, for IMPAN journals add:

\baselineskip=17pt

%%%%%%%%%%%

%% In the running head, replace first names by initials
%% and give an abbreviation of the title.

\title[Lattice Points]{On lattice points in large convex bodies}
\author[J. Guo]{Jingwei Guo}

\address{Jingwei Guo \\  Department of Mathematics\\ University of Wisconsin-Madison\\ Madison,
WI 53706, USA}
\email{guo@math.wisc.edu}

\date{}

\begin{abstract}
We consider a compact convex body $\mathcal{B}$ in $\mathbb{R}^d$ $(d\geqslant 3)$ with smooth boundary and
nonzero Gaussian curvature and prove a new estimate of $P_{\mathcal{B}}(t)$, the remainder in the lattice point problem, which improves previously known best result.
\end{abstract}

\subjclass[2010]{Primary 11P21, 11L07}
\keywords{Lattice points, convex bodies, exponential sums, van der Corput's method.}

\maketitle

\section{Introduction}\label{intro}

Let $\mathcal{B}$ denote a compact convex subset of
$\mathbb{R}^d$ $(d\geqslant 3)$, which contains the origin as an interior
point. Suppose that the boundary $\partial \mathcal{B}$ of
$\mathcal{B}$ is a $(d-1)$-dimensional surface of class
$C^{\infty}$ with nonzero Gaussian curvature throughout.
The remainder in the lattice point problem is defined to be
\begin{equation}
P_{\mathcal{B}}(t)=\#(t\mathcal{B}\cap\mathbb{Z}^d)-\textrm{vol}(\mathcal{B})t^d \quad \textrm{for $t\geqslant 1$}. \nonumber
\end{equation}
People are interested in finding a number $\lambda(d)$ as small as possible such that
\begin{displaymath}
P_{\mathcal{B}}(t)=O(t^{d-2+\lambda(d)}).
\end{displaymath}

It is conjectured that $\lambda(d)=0$ for $d\geqslant 5$ and $\lambda(d)=\varepsilon$ for $d=3$ and $4$ where $\varepsilon>0$ is arbitrary. For spheres this bound is sharp in dimension $d\geqslant 4$ (cf. Walfisz \cite{walfisz}) while open in dimension $3$. Bentkus and G\"otze \cite{Bentkus-Gotze} proved the conjecture for ellipsoids in dimension $d\geqslant 9$.

For general convex bodies the problem is still open. By a combination of the Poisson summation formula and (nowadays standard)
oscillatory integral estimates, Hlawka \cite{Hlawka} obtained $\lambda(d)=\frac{2}{d+1}$.

In two joint papers Kr\"{a}tzel and Nowak \cite{nowak I, nowak II} used estimates for one
and two dimensional exponential sums to improve the exponent. They obtained $\lambda(d)=\frac{3}{2d}+\varepsilon$ for $d\geqslant 7$ among other results.

M\"{u}ller \cite{mullerII} significantly sharpened their result by extending their estimate to a $d$-dimensional version and he obtained
\begin{equation}
\lambda(d)=\left\{ \begin{array}{ll}
\frac{d+4}{d^2+d+2}+\varepsilon & \textrm{for $d\geqslant 5$}\\
\frac{6}{17}+\varepsilon & \textrm{for $d=4$}\\
\frac{20}{43}+\varepsilon & \textrm{for $d=3$}  \label{muller-exponent}
\end{array},\right.
\end{equation}
where $\varepsilon>0$ is arbitrary.

We first observe that estimates of certain oscillatory integrals in M\"{u}ller's paper can be obtained by using the method of stationary phase. This observation leads to our Proposition \ref{B-process} below which recovers M\"{u}ller \cite{mullerII} Theorem 2 without the $\varepsilon$ there.  This already leads to an improvement of \eqref{muller-exponent} with $\varepsilon=0$.

If we use asymptotic expansions of those oscillatory integrals, the leading terms form new exponential sums to which we can iterate M\"{u}ller's $d$-dimensional estimate. This iteration leads to our new estimate of exponential sums in Theorem \ref{Exp-Sum} below, which is in fact the main result of this paper. As a consequence, we can obtain the following new bound of $P_{\mathcal{B}}(t)$ for every dimension $d\geqslant 3$:
\begin{theorem}\label{lattice}
If $\mathcal{B}$ satisfies the
conditions stated above, then the bound
$P_{\mathcal{B}}(t)=O(t^{d-2+\beta(d)})$ holds for
\begin{equation}
\beta(d)=\bigg\{ \begin{array}{ll}
\frac{d^2+3d+8}{d^3+d^2+5d+4} & \textrm{for $d\geqslant 4$}\\
\frac{73}{158} & \textrm{for $d=3$} \nonumber
\end{array}.
\end{equation}
The implicit constant may only depend on the body $\mathcal{B}$.
\end{theorem}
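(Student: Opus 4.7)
The plan is to deduce Theorem \ref{lattice} from the exponential sum estimate of Theorem \ref{Exp-Sum} by the standard Poisson summation route, in the spirit of Hlawka \cite{Hlawka} and M\"uller \cite{mullerII}.

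First I would smooth the counting problem by convolving $\chi_{t\mathcal{B}}$ with a bump function of scale $\delta$ on $\mathbb{R}^d$; this introduces an admissible boundary-strip error of $O(\delta t^{d-1})$ thanks to the $C^\infty$ smoothness and nonzero curvature of $\partial\mathcal{B}$. Applying Poisson summation to the smoothed indicator, followed by the standard stationary phase expansion of $\hat{\chi}_{t\mathcal{B}}(m)$ that comes from the non-vanishing Gaussian curvature of $\partial\mathcal{B}$,
\[
\hat{\chi}_{t\mathcal{B}}(m) = t^{(d-1)/2}|m|^{-(d+1)/2}\sum_{\pm}a_{\pm}(m/|m|)\,e^{\pm 2\pi i t H(m/|m|)} + (\text{lower-order terms}),
\]
reduces $P_{\mathcal{B}}(t)$, up to admissible errors, to a collection of weighted exponential sums of the shape
\[
t^{(d-1)/2}\sum_{|m|\sim M}\frac{a(m/|m|)}{|m|^{(d+1)/2}}\,e^{2\pi i t H(m/|m|)},
\]
where $H$ is the support function of $\mathcal{B}$ and $M$ runs over dyadic scales with $M\lesssim \delta^{-1}$.

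Second, I would apply Theorem \ref{Exp-Sum} to each dyadic block. The phase $tH(m/|m|)$ inherits from the positive curvature of $\partial\mathcal{B}$ precisely the non-degeneracy required by Theorem \ref{Exp-Sum}, since the Gauss map is then a diffeomorphism onto the sphere. Summing the dyadic contributions produces the principal exponential sum bound, while the range $|m|\gtrsim \delta^{-1}$ is absorbed trivially by combining the pointwise decay $|m|^{-(d+1)/2}$ with the rapid decay of the Fourier cutoff $\hat{\rho}(\delta m)$.

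Finally I would balance the boundary-strip error $\delta t^{d-1}$ against the exponential sum bound from Theorem \ref{Exp-Sum} and choose $\delta$ optimally. With the exponents produced by the iterated B-process underlying Theorem \ref{Exp-Sum} (and recorded in Proposition \ref{B-process}), this optimization should give $\beta(d)=(d^2+3d+8)/(d^3+d^2+5d+4)$ for $d\geq 4$ and $\beta(3)=73/158$. The hard part will be the bookkeeping in this final optimization, particularly for $d=3$: the value $73/158$ is strictly smaller than what the generic formula $(d^2+3d+8)/(d^3+d^2+5d+4)$ predicts at $d=3$ (namely $26/55$), so this case will require a refinement of the generic argument --- most likely combining the multidimensional estimate with a classical one-dimensional van der Corput bound on one coordinate direction of the dyadic block --- beyond what suffices for $d\geq 4$.
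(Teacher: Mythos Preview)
Your outline is broadly on the right track—smoothing, Poisson summation, stationary phase for $\hat{\chi}_{\mathcal{B}}$, dyadic localization, then balancing against the boundary error—but it has a genuine gap at the point where you invoke Theorem \ref{Exp-Sum}. You write that ``the phase $tH(m/|m|)$ inherits from the positive curvature of $\partial\mathcal{B}$ precisely the non-degeneracy required by Theorem \ref{Exp-Sum}''. This is not true. The curvature condition controls only the Hessian of $H$ (it has rank $d-1$ with nonzero eigenvalues $\asymp |\xi|^{-1}$), whereas Theorem \ref{Exp-Sum} requires the much stronger hypotheses \eqref{c} and \eqref{entry-estimate}: nonvanishing of the $d\times d$ determinant of \emph{higher} derivatives $(\partial^{k+2}F/\partial x_1\partial x_i\partial x_j\partial x_d^{k-1})$ for $k=q,q+1$, together with a specific size pattern on the individual entries. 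These conditions fail in generic coordinates. The paper devotes Section \ref{nonvanishing} (Lemma \ref{keylemma}) to constructing, for each direction $\xi$, a basis $v_1,\ldots,v_d\in\mathbb{Z}^d$ adapted to $\xi$ in which these higher-order determinants are bounded below and the entry estimates hold; one then covers $\mathscr{C}_1$ by small balls, decomposes $\mathbb{Z}^d$ into cosets of the sublattice $\mathbb{Z}v_1+\cdots+\mathbb{Z}v_d$, and only then applies Theorem \ref{Exp-Sum} coset by coset. Without this step you cannot feed the dyadic sums into Theorem \ref{Exp-Sum} at all.

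Your account of the $d=3$ case is also off. The improvement from $26/55$ to $73/158$ does not come from grafting a one-dimensional van der Corput bound onto the multidimensional estimate; it comes from applying Theorem \ref{Exp-Sum} with $q=2$ rather than $q=1$ (i.e.\ an $A^2BAB$ process instead of $ABAB$). The theorem's exponent $d^2/(2(Q-1)d^2+2Qd+4Q)$ with $Q=2^q$ is what, after the final balancing, produces $\beta(d)$ for $d\ge 4$ at $q=1$ and $\beta(3)=73/158$ at $q=2$. Finally, a minor point: the phase is $tH(k)$ with $H$ homogeneous of degree $1$, so on the dyadic shell $|k|\sim M$ the relevant large parameter is $T=tM$, not $t$ alone; keeping track of this is what makes the restriction \eqref{restriction} in Theorem \ref{Exp-Sum} relevant and determines the range of $M$ over which the estimate applies.
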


It's not hard to check our estimate is indeed sharper than \eqref{muller-exponent}. In particular, for large $d$ this is clear because $\beta(d)=\frac{1}{d}+\frac{2}{d^2}+O(\frac{1}{d^3})$ while $\lambda(d)=\frac{1}{d}+\frac{3}{d^2}+O(\frac{1}{d^3})+\varepsilon$.

For more results of the problem (e.g. average and lower bounds of the remainder) the reader could check \cite{nowak I}, \cite{nowak II}, \cite{mullerI}, \cite{mullerII}, \cite{nowak-seq-1}, \cite{nowak-seq-2}, and \cite{nowak-seq-3}.

In the case of planar domains, the sharpest known bound
\begin{displaymath}
P_{\mathcal{B}}(t)=O(t^{\frac{131}{208}}(\log t)^{2.26})
\end{displaymath}
is due to Huxley \cite{huxley2003}, who applied his refined variant of the ``Discrete Hardy-Littlewood Method'' originally due to Bombieri, Iwaniec, and Mozzochi. Huxley's method beats the classical theory of exponential sums, but it seems to be purely two dimensional. In this paper, we focus on higher dimensions and our main tools are still from the classical theory.

\medskip

{\it Notation:}  We use the usual Euclidean norm for a vector. $B(x, r)$ represents the usual Euclidean ball centered at $x$ with radius $r$. The norm of a matrix $A\in \mathbb{R}^{d\times d}$ is given by $\|A\|=\sup_{|x|=1}|Ax|$. $e(f(x))=\exp(-2\pi i f(x))$ and $\mathbb{Z}_{*}^{d}=\mathbb{Z}^{d}\setminus \{0\}$. For a set $E\subset \mathbb{R}^d$ and a positive number $a$, we define $E_{(a)}$ to be a larger set
\begin{equation}
E_{(a)}=\{x\in \mathbb{R}^d: \textrm{dist}(E, x)<a \}. \nonumber
\end{equation}

We use the differential operators
\begin{equation}
D^{\mu}_{x}=\frac{\partial^{|\mu|}}{\partial x_1^{\mu_1} \cdots \partial x_d^{\mu_d}} \quad \big(\mu=(\mu_1, \ldots, \mu_d)\in \mathbb{N}_0^d, |\mu|=\sum_{i=1}^{d}\mu_i \big)\nonumber
\end{equation}
and the gradient operator $D_x$. We often omit the subscript if no ambiguity occurs.

For functions $f$ and $g$ with $g$ taking non-negative real values, $f\lesssim g$ means $|f|\leqslant Cg$ for some constant $C$. If $f$ is also non-negative, $f\gtrsim g$ means $g\lesssim f$. The Landau notation $f=O(g)$ is equivalent to $f\lesssim g$. The notation $f\asymp g$ means that $f\lesssim g$ and $g\lesssim f$.

We will adopt a convention due to Bruna et al. \cite{bruna} and say that a constant is \textit{allowable} if it only depends on the body $\mathcal{B}$. Throughout this paper except Section \ref{prelim}, all constants implied by the notation $\lesssim$, $\gtrsim$, $\asymp$, and O($\cdot$) are allowable.

Wherever a variable occurs as a summation variable the reference
is to integral values of the variable.

\medskip

{\it Structure of the paper:} In Sect.\ref{prelim}, after several preliminary lemmas we prove three estimates of exponential sums. In particular, the last one is the main result of this paper. In Sect.\ref{nonvanishing}, we show that certain matrices have nonvanishing determinants and
their entries satisfy some size estimates. In Sect.\ref{mainproof}, we put these ingredients together to prove Theorem \ref{lattice}. At last we put one quantitative version of inverse function theorem in the appendix.

\section{Estimates of Exponential Sums}\label{prelim}

The classical theory of exponential sums has two processes: the Weyl-van der Corput inequalities (A-process) and the Poisson summation formula followed by the method of stationary phase (B-process). Before the estimation of exponential sums, we first introduce two preliminary lemmas related to these two processes.

\medskip

For integrals in the form
\begin{equation}
I(\lambda)=\int_{\mathbb{R}^d}w(x)e^{i\lambda f(x)}dx,  \nonumber
\end{equation}
H\"{o}rmander \cite{hormander} Theorem 7.7.5 gives an asymptotic formula for the case when the phase function $f$ has a nondegenerate critical point. It is one of the expressions of the method of stationary phase and we only need it when $f$ takes real value.

\begin{lemma}\label{SP}
Let $K\subset \mathbb{R}^d$ be a compact set, $X$ an
open neighborhood of $K$, and $k$ a positive integer. If $f$ is real and in $C^{\infty}(X)$, $w\in C_{0}^{\infty}(K)$, $D f(x_0)=0$,
det$(D^2 f(x_0))\neq 0$, and $D f\neq 0$ in $K\setminus \{x_0\}$, then
\begin{align}
\big|I(\lambda)-(2\pi)^{\frac{d}{2}}&e^{i(\frac{\pi}{4}\textrm{sgn}D^2f(x_0)+\lambda f(x_0))}|\textrm{det}(D^2f(x_0))|^{-\frac{1}{2}} \lambda^{-\frac{d}{2}}\sum_{j=0}^{k-1}\lambda^{-j}L_jw\big|\nonumber \\
     &\leqslant C \lambda^{-k} \sum_{|\mu|\leqslant 2k} \sup\limits_{x}|D^{\mu} w(x)|, \quad \textrm{for $\lambda>1$}.\nonumber
\end{align}
Here $C$ is bounded when $f$ stays in a bounded set in $C^{3k+1}(X)$ and $|x-x_0|/|Df(x)|$ has a uniform bound. With
\begin{equation}
g_{x_0}(x)=f(x)-f(x_0)-\langle D^2f(x_0)(x-x_0), x-x_0\rangle/2 \nonumber
\end{equation}
which vanishes of third order at $x_0$ we have
\begin{equation}
L_j w=\sum_{\upsilon-\gamma=j}\sum_{2\upsilon\geqslant 3\gamma}i^{-j}2^{-\upsilon}\langle D^2f(x_0)^{-1}D, D\rangle^{\upsilon}
(g^{\gamma}_{x_0}w)(x_0)/\upsilon!\gamma!. \nonumber
\end{equation}
\end{lemma}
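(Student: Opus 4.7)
The plan is to recognize this statement as H\"{o}rmander's Theorem 7.7.5 specialized to real-valued phase, and sketch the standard derivation rather than reproduce the full bookkeeping. After translating so $x_0 = 0$ and factoring out $e^{i\lambda f(x_0)}$, I may assume the critical point sits at the origin with $f(0) = 0$. Set $A = D^2 f(0)$, which is symmetric and invertible by hypothesis.

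The first step is the model case of a purely quadratic phase $f(x) = \frac{1}{2}\langle Ax, x\rangle$. By Parseval's identity and the explicit Fourier transform of the complex Gaussian, one obtains the exact identity
\begin{equation*}
\int_{\mathbb{R}^d} w(x)\, e^{i\lambda \langle Ax, x\rangle/2}\, dx = \frac{(2\pi)^{d/2} e^{i\frac{\pi}{4}\textrm{sgn}(A)}}{\lambda^{d/2}|\det A|^{1/2}}\, \big(e^{-i\langle A^{-1}D, D\rangle/(2\lambda)} w\big)(0),
\end{equation*}
where the exponential of the differential operator is interpreted via its Taylor series. Truncating at order $k$ yields the asymptotic expansion in this model case, with $L_j$ reducing to the $\gamma = 0$ contributions $i^{-j}\, 2^{-j}\, \langle A^{-1}D, D\rangle^j w(0)/j!$.

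For a general phase, I would split $f(x) = f(x_0) + \frac{1}{2}\langle A(x-x_0), (x-x_0)\rangle + g_{x_0}(x)$ and expand $e^{i\lambda g_{x_0}(x)} = \sum_{\gamma<k}(i\lambda g_{x_0}(x))^\gamma/\gamma! + R_k(x)$. Applying the quadratic-phase formula to each term $w(x) g_{x_0}(x)^\gamma$ then produces the full sum defining $L_j$. The restriction $2\upsilon \geqslant 3\gamma$ appears because $g_{x_0}$ vanishes to third order at $x_0$, so $\langle A^{-1}D, D\rangle^\upsilon (g_{x_0}^\gamma w)(x_0)$ only survives when at least $3\gamma$ derivatives fall on the factor $g_{x_0}^\gamma$, forcing $2\upsilon \geqslant 3\gamma$.

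The main obstacle is bounding the error terms uniformly in $f$. Contributions come from (a) the remainder $R_k(x) = O((\lambda |x-x_0|^3)^k)$, whose integral against the quadratic phase is absorbed into $\lambda^{-k}\sum_{|\mu|\leqslant 2k}\sup|D^\mu w|$ after repeated integration by parts trading $|x-x_0|$ factors for derivatives of $w$, and (b) the quadratic-phase remainders applied to $w g_{x_0}^\gamma$. The uniformity of $C$ is guaranteed precisely by the $C^{3k+1}$ control on $f$ and the bound on $|x-x_0|/|Df(x)|$, which is needed both to control the support of $w$ away from $x_0$ and to make all intermediate constants in the Taylor remainders depend only on the claimed quantities. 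Since this is exactly the content of H\"{o}rmander's proof, in practice I would simply invoke Theorem 7.7.5 and observe that our hypotheses match his in the real-valued case.
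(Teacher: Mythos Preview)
Your proposal is correct and in fact goes beyond what the paper does: the paper gives no proof of this lemma at all, but simply quotes it as H\"{o}rmander \cite{hormander} Theorem 7.7.5, restricted to real-valued phase. Your recognition of this, together with the standard sketch via the quadratic model case and Taylor expansion of $e^{i\lambda g_{x_0}}$, matches H\"{o}rmander's own argument, so there is nothing to add.
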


{\it Remark:} 1) $L_j$ is a differential operator of order $2j$ acting on $w$ at $x_0$. The sum has only a finite number of terms for each $j$.

2) The integral $I(\lambda)$ has the following asymptotic expansion:
\begin{equation}
I(\lambda)=\lambda^{-\frac{d}{2}}\sum_{j=0}^{N_1}a_j \lambda^{-j}+O(\lambda^{-\frac{d}{2}-N_1-1})   \quad \textrm{for any $N_1\in \mathbb{N}$}.     \nonumber
\end{equation}
The constant implied in the error term depends on $d$, $N_1$, size of $K$, upper bounds of finitely many derivatives of $w$ and $f$ in the support of $w$, and lower bound of $|\textrm{det}(D^{2}f(x_0))|$. Each coefficient $a_j$ depends on $d$, $j$, values of finitely many derivatives of $w$ and $f$ at the point $x_0$, and value of $|\textrm{det}(D^{2}f(x_0))|$. These $a_j$'s have explicit formulas, in particular
\begin{equation}
a_0=(2\pi)^{\frac{d}{2}}w(x_0)e^{i(\frac{\pi}{4}\textrm{sgn}D^2f(x_0)+\lambda f(x_0))}/|\textrm{det}(D^2f(x_0))|^{\frac{1}{2}}. \nonumber
\end{equation}

\medskip

Suppose $M>1$ and $T>0$ are parameters. We consider $d$-dimensional exponential sums of the form
\begin{equation}
S=S(T, M; G, F)=\sum_{m\in\mathbb{Z}^d} G(\frac{m}{M}) e(TF(\frac{m}{M})), \label{expsum-S}
\end{equation}
where $G:\mathbb{R}^d\rightarrow\mathbb{R}$ is $C^{\infty}$ smooth, compactly supported, and bounded above by a constant, and
$F:\Omega\subset \mathbb{R}^d \rightarrow\mathbb{R}$ is $C^{\infty}$ smooth on an open convex domain $\Omega$ such that
\begin{equation}
\textrm{supp} (G)\subset \Omega \subset c_0 B(0, 1),  \label{size-of-domain}
\end{equation}
where  $c_0>0$ is a fixed constant.

We are interested in finding upper bounds of $S$ in terms of $T$ and $M$. Exponential sums of the form \eqref{expsum-S} are essentially the same as those considered in M\"{u}ller \cite{mullerII}. In lower dimension Huxley studied sums in a similar but more complicated form, for example, see \cite{huxley2003}.

The following lemma is a variant of M\"{u}ller \cite{mullerII} Lemma 1, namely the so-called iterated one-dimensional Weyl-van der Corput inequality.

\begin{lemma}\label{weyl}
Let $q\in \mathbb{N}$, $G$, $F$, and $S$ be as above, and $r_1, \ldots,
r_q\in \mathbb{Z}^d$ be nonzero integral vectors with
$|r_i|\lesssim 1$. Furthermore, let $H$ be a real parameter which
satisfies $1<H\lesssim M$. Set $Q=2^q$ and
$H_l=H_{q,l}=H^{2^{l-q}}$ for $1\leqslant l\leqslant q$. Then
\begin{equation}
|S(T, M; G, F)|^{Q}\lesssim \frac{M^{Qd}}{H} + \frac{M^{(Q-1)d}}{H_1\cdots
H_q} \sum_{\substack{1\leqslant h_i<H_i \\1 \leqslant i \leqslant
q}} |S(\mathscr{H}TM^{-q}, M; G_q, F_q)|, \nonumber
\end{equation}
where $\mathscr{H}=\prod_{l=1}^{q}h_l$ and functions $G_q$, $F_q$ are defined as follows:
\begin{equation}
G_q(x)=G_q(x, h_1, \ldots, h_q)=\prod_{\substack{u_i\in\{0,1\}\\
1\leqslant i\leqslant q}} G(x+\sum_{l=1}^{q} \frac{h_l}{M} u_l r_l)  \nonumber
\end{equation}
and
\begin{align}
&F_q(x)=F_q(x, h_1, \ldots, h_q) \nonumber \\
            &=\int_{(0,1)^q} (r_1\cdot D)\cdots(r_q\cdot D)F
       (x+\sum_{l=1}^{q} \frac{h_l}{M} u_l r_l)du_1\ldots du_q.\nonumber
\end{align}

The integral representation of $F_q$ is well defined on
the open convex set $\Omega_q=\Omega_q(h_1, \ldots,
h_q)=\{x\in \Omega: x+\sum_{l=1}^{q} (h_l/M) u_l r_l \in
\Omega \textrm{ for all } u_l\in \{0,1\}, 1\leqslant l\leqslant q\}$.
supp$(G_q)\subset \Omega_q\subset \Omega$.
\end{lemma}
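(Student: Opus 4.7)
The plan is to prove the lemma by induction on $q$, using the classical one-dimensional Weyl--van der Corput inequality both as the base case and as the extra step in the recursion. For $q=1$, write $H\,S=\sum_{h_1=0}^{H-1}\sum_m G(m/M)e(TF(m/M))$ and reindex $m\mapsto m-h_1 r_1$ (legal since $r_1\in\mathbb{Z}_*^d$ and $|r_1|\lesssim 1$), then apply Cauchy--Schwarz to the outer $m$-sum, whose effective support has $\sim M^d$ elements. Expanding the square separates a diagonal contribution $\lesssim HM^{2d}$ from off-diagonal terms indexed by $h_1\in[1,H)$. The identity
\[
F\!\left(x+\tfrac{h_1}{M}r_1\right)-F(x)=\frac{h_1}{M}\int_0^1(r_1\cdot DF)\!\left(x+\tfrac{u_1h_1}{M}r_1\right)du_1
\]
identifies each off-diagonal term as $|S(h_1TM^{-1},M;G_1,F_1)|$ with $G_1(x)=G(x)G(x+h_1r_1/M)$ and $F_1$ in the claimed integral form, while $\Omega_1$ is the intersection of $\Omega$ with its translate by $-h_1r_1/M$.

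For the inductive step from $q-1$ to $q$, I apply the lemma for $q-1$ to $S$ using the vectors $r_1,\ldots,r_{q-1}$ and the parameter $H':=H^{1/2}$. The algebra $(H')^{2^{l-(q-1)}}=H^{2^{l-q}}=H_l$ for $1\leq l\leq q-1$ makes the inner shift budgets coincide with those in the $q$-statement, yielding
\[
|S|^{2^{q-1}}\lesssim\frac{M^{2^{q-1}d}}{H^{1/2}}+\frac{M^{(2^{q-1}-1)d}}{H_1\cdots H_{q-1}}\sum_{h_1,\ldots,h_{q-1}}|S_{q-1}|,
\]
where $S_{q-1}:=S(\mathscr{H}'TM^{-(q-1)},M;G_{q-1},F_{q-1})$ and $\mathscr{H}'=h_1\cdots h_{q-1}$. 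Squaring and applying Cauchy--Schwarz to the inner sum (which has at most $\prod_{l<q}H_l$ terms) gives $|S|^Q\lesssim M^{Qd}/H+M^{(Q-2)d}/(H_1\cdots H_{q-1})\sum|S_{q-1}|^2$. Applying the base case in direction $r_q$ with parameter $H_q=H$ to each $|S_{q-1}|^2$ and summing over the remaining $h$'s contributes a term absorbed into $M^{Qd}/H$ and the desired
\[
\frac{M^{(Q-1)d}}{H_1\cdots H_q}\sum_{h_1,\ldots,h_q}\bigl|S(\mathscr{H}TM^{-q},M;G_q,F_q)\bigr|.
\]
The product identity $G_{q-1}(x)G_{q-1}(x+h_qr_q/M)=G_q(x)$ and the relation $\int_0^1(r_q\cdot D)F_{q-1}(x+u_qh_qr_q/M)\,du_q=F_q(x)$ (by commuting differentiation with integration) match the claimed amplitude and phase.

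The main obstacle is combinatorial rather than analytic: at every step one must check that the amplitude stays a product of shifted copies of $G$, that the phase stays a multi-integral of mixed directional derivatives of $F$, that $\mathrm{supp}(G_q)\subset\Omega_q\subset\Omega$ is preserved under the assumptions $|r_i|\lesssim 1$ and $H\lesssim M$, and that the numerical factors line up. The choice $H'=H^{1/2}$ is the decisive algebraic point: with it, both main-term contributions $M^{Qd}/(H')^2$ from the inductive hypothesis and $M^{Qd}/H_q$ from the additional Weyl step collapse onto the target $M^{Qd}/H$, whereas any other parameter match would weaken the final bound.
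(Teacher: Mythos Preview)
The paper does not actually supply a proof of this lemma: it is stated as ``a variant of M\"uller \cite{mullerII} Lemma~1'' and then used without further justification. So there is no in-paper argument to compare against beyond the implicit reference to M\"uller's proof.

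Your proposal is correct and is precisely the standard route one expects (and the one M\"uller takes): induction on $q$, with the single Weyl--van der Corput step (shift, Cauchy--Schwarz over an $m$-support of size $\asymp M^d$, expand the square, diagonal plus off-diagonal) serving both as the base case and as the engine in the recursion. Your key observation that choosing $H'=H^{1/2}$ in the inductive hypothesis makes $(H')^{2^{l-(q-1)}}=H_l$ for $l<q$ and forces both main terms $M^{Qd}/(H')^2$ and $M^{Qd}/H_q$ to collapse to $M^{Qd}/H$ is exactly the point of the parameter scheme $H_l=H^{2^{l-q}}$. The bookkeeping identities $G_{q-1}(x)G_{q-1}(x+h_qr_q/M)=G_q(x)$ and $(h_q/M)\int_0^1(r_q\cdot D)F_{q-1}(x+u_qh_qr_q/M)\,du_q=F_{q-1}(x+h_qr_q/M)-F_{q-1}(x)$, together with the factor $h_q$ absorbed into $\mathscr{H}$, are handled correctly. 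One cosmetic point: $H$ need not be an integer, so in the base case one should shift over $0\le h_1<\lfloor H\rfloor$ (or equivalently note $\lfloor H\rfloor\asymp H$); this is harmless for the $\lesssim$ conclusion.
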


We give, without a proof, an easy but useful result concerning the distance between the boundary of supp($G_q$) and $\Omega_q$.
\begin{lemma}\label{distance-boundary}
For fixed $(h_1, \ldots, h_q)$,
\begin{displaymath}
\textrm{dist}(\textrm{supp}(G), \Omega^{c}) \geqslant c_1\Rightarrow\textrm{dist}(\textrm{supp}(G_q), \Omega_q^{c}) \geqslant c_1.
\end{displaymath}
\end{lemma}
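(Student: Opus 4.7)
\medskip

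\textbf{Proof proposal for Lemma \ref{distance-boundary}.} The plan is a direct unwinding of the definitions of $\text{supp}(G_q)$ and $\Omega_q$, exploiting the fact that the translations $x\mapsto x+\sum_l (h_l/M) u_l r_l$ are isometries of $\mathbb{R}^d$, so distances are preserved under any common shift applied to two points. Fix an arbitrary $x\in\text{supp}(G_q)$ and an arbitrary $y\in\Omega_q^c$; I would show $|x-y|\geqslant c_1$.

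First I would record the key observation that, from the product formula for $G_q$, every $x\in\text{supp}(G_q)$ satisfies $x+\sum_{l=1}^q (h_l/M) u_l r_l \in \text{supp}(G)$ for \emph{all} choices of $u=(u_1,\ldots,u_q)\in\{0,1\}^q$. On the other hand, a point $y\in\Omega_q^c$ falls into exactly one of two cases: either (a) $y\notin\Omega$, or (b) $y\in\Omega$ but there exists some $u^*\in\{0,1\}^q$ with $y^*:=y+\sum_{l=1}^q (h_l/M) u_l^* r_l \notin \Omega$. In case (a), take $u^*=0$ and set $y^*=y$; in case (b) use the $u^*$ just produced. In both cases $y^*\in\Omega^c$.

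Now apply the shift with the same $u^*$ to $x$ and set $x^*:=x+\sum_{l=1}^q (h_l/M) u_l^* r_l$. By the observation above, $x^*\in\text{supp}(G)$. Since the two points have been translated by the same vector, $|x-y|=|x^*-y^*|$, and the hypothesis $\text{dist}(\text{supp}(G),\Omega^c)\geqslant c_1$ gives $|x^*-y^*|\geqslant c_1$. Hence $|x-y|\geqslant c_1$, which yields the desired inequality $\text{dist}(\text{supp}(G_q),\Omega_q^c)\geqslant c_1$.

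There is essentially no obstacle here beyond being careful with the bookkeeping of the two disjoint cases defining $\Omega_q^c$; the crucial structural fact is simply that $\text{supp}(G_q)$ is described by a \emph{universal} quantifier over $u\in\{0,1\}^q$ while $\Omega_q^c$ is described by an \emph{existential} quantifier, so one can always match the two by choosing the same shift. No smoothness or convexity of $\Omega$ is needed for this particular lemma.
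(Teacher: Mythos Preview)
Your argument is correct. The paper itself states this lemma \emph{without proof} (``We give, without a proof, an easy but useful result\ldots''), so there is no approach to compare against; your short translation argument --- matching the universal quantifier in the description of $\textrm{supp}(G_q)$ against the existential quantifier in the description of $\Omega_q^c$ via a common shift --- is exactly the intended one-line justification. One tiny pedantic point: since $\textrm{supp}(G_q)$ is by definition the closure of $\{G_q\neq 0\}$, the cleanest way to phrase your ``key observation'' is the containment $\textrm{supp}(G_q)\subset\bigcap_{u\in\{0,1\}^q}\bigl(\textrm{supp}(G)-\sum_l (h_l/M)u_l r_l\bigr)$, which follows because the closure of an intersection is contained in the intersection of closures; this gives $x^*\in\textrm{supp}(G)$ directly without worrying about whether $G_q(x)\neq 0$ at the particular point $x$.
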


The exponential sum $S$ is bounded by $CM^d$ trivially, but we lose cancelation by just putting absolute value on each term. Below we will prove three bounds of $S$ obtained by applying various combinations of A- and B-processes. In the statement of these results we will assume derivatives of $G$ and $F$ up to certain orders are uniformly bounded. The orders may not be optimal but sufficient for the proof.

We first prove a bound of $S(T, M; G, F)$ by applying a B-process. For an analogous result in $1$-dimension, see Theorem 2.2 in \cite{expsum}.

\begin{proposition}[Estimate by a B-process]\label{B-process} Let $d\geqslant 2$. Assume that dist(supp$(G)$, $\Omega^{c}$)$\geqslant c_1$ for some constant $c_1$ and for all $x\in\Omega$ and $\nu\in\mathbb{N}_{0}^{d}$ with $|\nu|\leqslant 3\lceil\frac{d}{2} \rceil +1$
\begin{equation}
(D^{\nu}G)(x)\lesssim 1, \label{aaaa}
\end{equation}
\begin{equation}
(D^{\nu}F)(x)\lesssim 1, \label{bbbb}
\end{equation}
and
\begin{equation}
|\textrm{det} (D^2 F(x))|\gtrsim 1, \label{cccc}
\end{equation}
then
\begin{equation}
S(T, M; G, F)\lesssim T^{\frac{d}{2}}+M^{d}T^{-\frac{d}{2}}. \label{B-process-est}
\end{equation}

The implicit constant in \eqref{B-process-est} depends on $d$, $c_0$, $c_1$, and constants implied in \eqref{aaaa}, \eqref{bbbb}, and \eqref{cccc}.
\end{proposition}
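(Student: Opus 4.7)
The plan is a B-process: Poisson summation converts $S$ into a dual sum of oscillatory integrals, each of which is then analyzed by the method of stationary phase (Lemma \ref{SP}) when the dual phase admits a critical point, or by repeated integration by parts otherwise.

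\textbf{Poisson summation and the dual sum.} Writing $\phi(x)=G(x/M)e(TF(x/M))$, the Poisson summation formula together with the substitution $y=x/M$ gives
\begin{equation}
S=M^d\sum_{n\in\mathbb{Z}^d}J_n,\qquad J_n=\int_{\mathbb{R}^d}G(y)\,e\!\big(TF(y)+Mn\cdot y\big)\,dy.\nonumber
\end{equation}
The phase $\psi_n(y)=TF(y)+Mn\cdot y$ has gradient $T\nabla F(y)+Mn$, so a critical point in $\mathrm{supp}(G)$ satisfies $-Mn/T\in\nabla F(\mathrm{supp}(G))$; since $|\nabla F|\lesssim 1$, the corresponding values of $n$ number at most $O(1+(T/M)^d)$.

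\textbf{Localization and stationary-phase evaluation.} From $|\det D^2F|\gtrsim 1$ and the uniform derivative bounds on $F$, the quantitative inverse function theorem (deferred to the appendix) supplies a radius $r_0>0$ on which $\nabla F$ is a diffeomorphism onto its image. Using a finite smooth partition of unity $G=\sum_\alpha G_\alpha$ whose pieces are supported in balls of radius $r_0/2$ lying in the $c_1$-interior of $\Omega$, one reduces to estimating $S$ with $G$ replaced by a single $G_\alpha$ (only $O(1)$ pieces appear, and the bounds in \eqref{aaaa} persist). For each $n$ for which $-Mn/T$ lies in a small neighborhood of $\nabla F(\mathrm{supp}(G_\alpha))$, the phase $\psi_n$ has a unique critical point $y_n$ in the corresponding ball; applying Lemma \ref{SP} with $\lambda=2\pi T$, $k=\lceil d/2\rceil$, and phase $-F-(Mn/T)\cdot y$ (whose Hessian $-D^2F$ is independent of $n$) yields $|J_n|\lesssim T^{-d/2}$ with a uniform implicit constant. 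Summing these contributions gives
\begin{equation}
M^d\cdot O\big(1+(T/M)^d\big)\cdot T^{-d/2}=O\big(T^{d/2}+M^dT^{-d/2}\big).\nonumber
\end{equation}

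\textbf{Non-stationary $n$ and main obstacle.} For all remaining $n$ one has $|\nabla\psi_n|\gtrsim T+M|n|$ on $\mathrm{supp}(G_\alpha)$, and repeated integration by parts (using the standard non-stationary-phase operator $L^\ast$ with $Lg=(\nabla\psi_n\cdot\nabla g)/(-2\pi i|\nabla\psi_n|^2)$) produces the bound $|J_n|\lesssim_k(T+M|n|)^{-k}$ for any $k$ bounded by the number of available derivatives of $G$ and $F$; taking $k>d$ renders the resulting sum $M^d\sum|J_n|$ negligible compared with $T^{d/2}+M^dT^{-d/2}$. The principal technical point, and the one that needs the most care, is the uniformity of the implicit constant in Lemma \ref{SP} as $n$ ranges over the critical indices: this is ensured by the fact that $D^2\psi_n=T\,D^2F$ does not depend on $n$, so the lower bound on $|\det D^2\psi_n|/T^d$ and the $C^{3k+1}$-norm of the phase are uniform, and the ratio $|y-y_n|/|\nabla\psi_n(y)|$ required by the lemma is controlled on each partition-of-unity piece by the quantitative inverse function theorem.
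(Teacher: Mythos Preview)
Your proposal is correct and follows essentially the same B-process as the paper: Poisson summation, then stationary phase via Lemma \ref{SP} for the $O(1+(T/M)^d)$ dual indices that admit a critical point, and repeated integration by parts for the remaining indices. The only organizational difference is cosmetic---you partition $G$ once into pieces on which $\nabla F$ is a diffeomorphism (so each piece sees at most one critical point per $n$), whereas the paper keeps $G$ intact and, for each fixed dual index $p$, introduces cut-offs around the boundedly many critical points of $\Phi(\cdot,p)$; both routes rely on the same quantitative inverse function theorem and yield identical bounds.
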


\begin{proof}[Proof of Proposition \ref{B-process}]
Applying to $S$ the $d$-dimensional Poisson summation formula followed by a change of variables $y=M x$ yields
\begin{equation}
\begin{split}
S(T, M&; G, F)=\sum_{p\in\mathbb{Z}^d}\int_{\mathbb{R}^d} G(\frac{y}{M})
e(TF(\frac{y}{M})-y\cdot p)dy \\
      &=\sum_{p\in \mathbb{Z}^d} M^d \int G(x) e(TF(x)-Mx\cdot p) dx,
\end{split}\label{eee}
\end{equation}

By \eqref{bbbb}, there exists a sufficiently large constant $A_0$ such that
\begin{equation}
|D F(x)|\leqslant A_0/2.  \nonumber
\end{equation}
We split the sum in \eqref{eee} into two parts, namely
\begin{equation}
S(T, M; G, F)=\sum_{|p|\geqslant A_0 T/M}+\sum_{|p|<A_0T/M}=:\textrm{I}+\textrm{II}, \nonumber
\end{equation}
and distinguish the estimation into two cases.

{\bf Case 1.} If $|p|\geqslant A_0T/M$.

Let $\Psi(x, p)=(TF(x)-Mx\cdot p)/(M|p|)$, then
\begin{equation}
\textrm{I}=\sum_{|p|\geqslant A_0T/M} M^d \int G(x) e(M|p|\Psi(x, p)) dx.  \nonumber
\end{equation}

Under the given assumptions, for all $x\in \Omega$ and $|\nu|\leqslant d+2$ we have $D^{\nu}G(x)\lesssim 1$, $D^{\nu}_x \Psi(x,p) \lesssim 1$, and also
\begin{equation}
|D_x \Psi+
p/|p||=| T DF(x)/(M|p|)|\leqslant 1/2,    \nonumber
\end{equation}
which ensures $|D_x\Psi|\geqslant 1/2$. By integration by parts (H\"{o}rmander \cite{hormander} Theorem 7.7.1) we get
\begin{equation}
\int G(x) e(M|p|\Psi(x, p)) dx\lesssim (M|p|)^{-d-1} \nonumber
\end{equation}
which leads to
\begin{equation}
\textrm{I}\lesssim M^{-1} \sum_{p\in \mathbb{Z}_{*}^{d}} |p|^{-d-1}\lesssim M^{-1}.    \nonumber
\end{equation}

{\bf Case 2.} If $|p|<A_0T/M$.

Let $\Phi(x, p)=F(x)-(M/T) x\cdot p$, then
\begin{equation}
\textrm{II}=\sum_{|p|< A_0T/M} M^d \int
G(x) e(T\Phi(x, p)) dx.  \nonumber
\end{equation}

If $T\leqslant 1$, sum II $\lesssim M^d\leqslant M^d T^{-d/2}$.

If $T>1$, we claim that each integral in sum II is $\lesssim T^{-d/2}$. Assume this for a moment, then
\begin{equation}
\textrm{II} \lesssim (1+(T/M)^d)M^d T^{-\frac{d}{2}}=T^{\frac{d}{2}}+M^d T^{-\frac{d}{2}}. \nonumber
\end{equation}

Observe that above bound is true for sum II no matter whether $T\leqslant 1$ or $T>1$. It follows that
\begin{equation}
S(T, M; G, F)\lesssim M^{-1}+T^{\frac{d}{2}}+M^d T^{-\frac{d}{2}}\lesssim T^{\frac{d}{2}}+M^d T^{-\frac{d}{2}},\nonumber
\end{equation}
which is the desired bound for $S(T, M; G, F)$. The only thing left is to prove above claim.

\bigskip

Let's fix a $|p|<A_0T/M$. For all $x\in \Omega$ and $|\nu|\leqslant 3\lceil \frac{d}{2}\rceil+1$, given assumptions imply $D^{\nu}_{x}\Phi(x, p)\lesssim 1$ and $|\textrm{det} (D^2_{xx} \Phi(x, p))|\gtrsim 1$. We first need to study
critical points of the phase function. Denote
$f(x)=DF(x)$, $\widetilde{p}=(M/T)p$, then $D_x\Phi(x, p)=f(x)-\widetilde{p}$. The critical points
are determined by the equation
\begin{equation}
f(x)=\widetilde{p}, \quad \quad \textrm{$x \in \Omega$}. \nonumber
\end{equation}

We know that $\textrm{supp}(G)$ is strictly smaller
than $\Omega$ and the distance between their boundary is larger than constant $c_1$. Let $r_0=c_1/2$. By the Taylor's formula, there exists a uniform $r_*$($<r_0$) such that if $\widetilde{x}$ is a critical point in $(\textrm{supp}(G))_{(r_0)}$\footnote{For the meaning of this notation, check Section \ref{intro}.} then $|D_x\Phi(x, p)|\gtrsim |x-\widetilde{x}|$ for any $x\in B(\widetilde{x}, r_*)$.

Applying Lemma \ref{app1} to $f$ with above $r_0$ yields two uniform positive numbers $r_1$, $r_2$ such that $2r_1\leqslant r_*$ and for any $x\in (\textrm{supp}(G))_{(r_0)}$,
$f$ is bijective from $B(x, 2r_1)$ to an open set containing $B(f(x), 2r_2)$.

If $x_1$, $x_2$ are two different critical points in $(\textrm{supp}(G))_{(r_0)}$ (if exist), then $B(x_1, r_1)$ and $B(x_2, r_1)$ are disjoint and still contained in $\Omega$. It follows, simply by a size estimate, that the number of possible critical points in $(\textrm{supp}(G))_{(r_0)}$ is bounded by a constant.

We will only consider critical points in $(\textrm{supp}(G))_{(r_1)}$ below. Denote
\begin{equation}
S_p=\{x\in \textrm{supp}(G): |D_x\Phi(x, p)|<r_2\}. \nonumber
\end{equation}

If $S_p$ is empty, which means $|D_x\Phi|$ has
a lower bound $r_2$ on $\textrm{supp}(G)$, by integration by parts the integral is of order $O(T^{-\lceil \frac{d}{2}\rceil})$.

If $S_p$ is not empty, at least one critical point exists in $(\textrm{supp}(G))_{(r_1)}$. To see this, assume $x\in S_p$ which implies $|f(x)-\widetilde{p}|<r_2$. Note that $f$ is bijective from $B(x, r_1)$ to an open set containing $B(f(x), r_2)$, hence there exists a point $\widetilde{x}\in B(x, r_1)$ such that $f(\widetilde{x})=\widetilde{p}$. This means $\widetilde{x}$ is a critical point and $x\in B(\widetilde{x}, r_1)\subset \Omega$. As a consequence, $S_p$ is contained in the union of finitely many balls centered at critical points with radius $r_1$.

Assume $\widetilde{x}_{i}(p)$ ($1\leqslant i\leqslant
J(p)$) are all critical points in $(\textrm{supp}(G))_{(r_1)}$. Let
\begin{equation}
\chi_{i}(x)=\chi((x-\widetilde{x}_{i}(p))/r_1), \nonumber
\end{equation}
where $\chi(x)$ is a given smooth cut-off function whose value is $1$ if $|x|\leqslant 1/2$ and $0$ if $|x|>1$. Let $\chi_{0}=1-\sum_{i=1}^{J(p)}\chi_{i}$, then
\begin{equation}
\begin{split}
\int
G(x) e(T\Phi(x, p)) dx  = &\sum_{i=1}^{J(p)} \int \chi_{i}(x)G(x) e(T\Phi(x, p)) dx \\
 &+\int \chi_{0}(x) G(x) e(T\Phi(x, p)) dx.\nonumber
 \end{split}
\end{equation}

For each $1\leqslant i\leqslant
J(p)$, the integral in above summation has its domain contained in $B(\widetilde{x}_i(p), r_1)$ and it is of order $O(T^{-d/2})$ by Lemma \ref{SP}.

If $x\in \textrm{supp}(G)\setminus \cup_{i=1}^{J(p)}B(\widetilde{x}_i, r_1/2)$, there exists a uniform constant $r_3$ such that $|D_x\Phi(x, p)|\geqslant r_3$. Hence the last integral above is of order $O(T^{-\lceil \frac{d}{2}\rceil})$ by integration by parts. This finishes the proof.
\end{proof}

Now we can prove another bound of $S(T, M; G, F)$ by applying A-process $q$ times (Lemma \ref{weyl}) followed by a B-process (Proposition \ref{B-process}). For analogous results in $1$-dimension, see Theorem 2.6, 2.8, 2.9 in \cite{expsum}.

\begin{proposition}[Estimate by an $\textrm{A}^q$B-process]\label{AB-process} Let $d\geqslant 3$. Assume that dist(supp$(G)$, $\Omega^{c}$)$\geqslant c_1$ for some constant $c_1$ and for all $x\in\Omega$ and $\nu\in\mathbb{N}_{0}^{d}$ with $|\nu|\leqslant 3\lceil \frac{d}{2}\rceil+q+1$
\begin{equation}
(D^{\nu}G)(x)\lesssim 1, \label{aaa}
\end{equation}
\begin{equation}
(D^{\nu}F)(x)\lesssim 1, \label{bbb}
\end{equation}
and for some
fixed $\mu\in\mathbb{N}_{0}^{d}$ with $q=|\mu|$ and all $x\in\Omega$
\begin{equation}
\big|\textrm{det} \big( \frac{\partial^{2}D^{\mu}F}{\partial x_i \partial
x_j}(x) \big) _{1\leqslant i,j\leqslant d}\big|\gtrsim 1. \label{ccc}
\end{equation}

If $T$ is restricted to
\begin{equation}
T \geqslant M^{q-\frac{2}{d}+\frac{2}{Q}} \quad (Q=2^q), \label{restriction-A^qB}
\end{equation}
then
\begin{equation}
S(T, M; G, F)\lesssim T^{w_{d,q}} M^{d-(q+2)w_{d,q}}, \label{esti-S}
\end{equation}
where
\begin{equation}
w_{d,q}=\frac{d}{2d(Q-1)+2Q}. \nonumber
\end{equation}

The implicit constant in \eqref{esti-S} depends on $d, q, c_0, c_1,$ and constants implied in \eqref{aaa}, \eqref{bbb}, and \eqref{ccc}.
\end{proposition}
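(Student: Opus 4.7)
The plan is to apply Lemma \ref{weyl} ($q$-fold Weyl--van der Corput, the A$^q$-process) to $S(T,M;G,F)$, treat each resulting inner sum by the B-process of Proposition \ref{B-process}, and then optimize the auxiliary parameter $H$. The interesting range of $T$ is $M^{q-2/d+2/Q}\le T\le M^{q+2}$; outside this range the claimed bound follows from the trivial estimate $|S|\lesssim M^d$, since $T^{w_{d,q}}M^{d-(q+2)w_{d,q}}=M^d$ exactly at $T=M^{q+2}$.

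Given the multi-index $\mu$ with $|\mu|=q$ from hypothesis \eqref{ccc}, I would take the $r_l$ in Lemma \ref{weyl} to be standard basis vectors $e_{i_1},\dots,e_{i_q}$ whose multiset realizes $\mu$, so that $(r_1\cdot D)\cdots(r_q\cdot D)=D^\mu$ and
\begin{equation*}
F_q(x)=\int_{(0,1)^q}(D^\mu F)\Bigl(x+\sum_{l=1}^{q}\frac{h_l}{M}u_l r_l\Bigr)du.
\end{equation*}
Lemma \ref{distance-boundary} supplies the distance hypothesis of Proposition \ref{B-process}; differentiating under the integral and invoking \eqref{aaa}--\eqref{bbb} produces the required derivative bounds on $G_q$ and $F_q$, with the extra $q$ orders of differentiability on $F$ in \eqref{bbb} precisely accommodating the $q$ directional derivatives inside the integral. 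For the Hessian condition I would further restrict $H$ by $H\le c_* M$ with a sufficiently small allowable $c_*>0$: the difference $D^2F_q(x)-D^2(D^\mu F)(x)$ is an average of perturbations vanishing as $\sum_l h_l/M\to 0$, so continuity on the compact closure of $\mathrm{supp}(G)$ together with \eqref{ccc} then yields $|\det D^2F_q|\gtrsim 1$.

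Lemma \ref{weyl} and Proposition \ref{B-process} combine to give
\begin{equation*}
|S|^Q\lesssim \frac{M^{Qd}}{H}+\frac{M^{(Q-1)d}}{H_1\cdots H_q}\sum_{h_i}\Bigl[(\mathscr{H}TM^{-q})^{d/2}+M^d(\mathscr{H}TM^{-q})^{-d/2}\Bigr].
\end{equation*}
Using $H_1\cdots H_q\asymp H^{2(Q-1)/Q}$, $\sum_{h<H_l}h^{d/2}\asymp H_l^{d/2+1}$, and, crucially since $d\ge 3$, $\sum_{h<H_l}h^{-d/2}=O(1)$, this collapses to
\begin{equation*}
|S|^Q\lesssim \frac{M^{Qd}}{H}+M^{(Q-1)d-qd/2}T^{d/2}H^{(Q-1)d/Q}+M^{Qd+qd/2}T^{-d/2}H^{-2(Q-1)/Q}.
\end{equation*}
I would then pick $H$ to balance the first two terms. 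Writing $\beta:=d(Q-1)+Q$, so that $Qw_{d,q}=dQ/(2\beta)$, this forces $H=M^{d(q+2)Q/(2\beta)}T^{-dQ/(2\beta)}$, at which both balanced terms equal $T^{Qw_{d,q}}M^{Qd-(q+2)Qw_{d,q}}$. A short algebraic check using the identity $d(q+2)Q-2\beta=dQq-2Q+2d$ shows that $H\le M$ is precisely \eqref{restriction-A^qB}; the parallel computation, which gains an extra factor $2(d-2)\beta\ge 0$, shows that under \eqref{restriction-A^qB} the third term is dominated by the first. Extracting $Q$-th roots then yields \eqref{esti-S}.

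The main obstacle, and the only step that is not essentially bookkeeping, is the Hessian nondegeneracy for $F_q$: averaging does not automatically preserve $|\det(\cdot)|\gtrsim 1$, and one must stay within a perturbative regime where continuity can be invoked. Fortunately this only imposes the harmless tightening $H\le c_*M$, which is already permitted by Lemma \ref{weyl} and does not disturb the optimal choice of $H$ above.
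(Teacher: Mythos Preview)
Your proposal is correct and follows essentially the same route as the paper: apply Lemma~\ref{weyl} with the $r_l$ chosen as the standard basis vectors realizing $\mu$, verify the hypotheses of Proposition~\ref{B-process} for $(G_q,F_q)$ (in particular the Hessian lower bound via $D^2F_q=D^2D^\mu F+O(H/M)$ once $H\le c_*M$), sum the resulting B-process bound over the $h_i$, and balance the first two of the three terms to choose $H$, noting that the third term is then dominated and that the constraint $H\lesssim M$ is exactly \eqref{restriction-A^qB}. The only cosmetic difference is that the paper makes the $O(H/M)$ perturbation explicit rather than phrasing it as a continuity argument, and absorbs the smallness constant into a prefactor $B_1$ in the definition of $H$.
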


{\it Remarks:} 1) If $T\gtrsim M^{q+2}$, the trivial bound
$S\lesssim M^d$ is better than the above estimate.

2) If we take $T=\Lambda M$, we immediately obtain the bound in M\"{u}ller \cite{mullerII} Theorem 2 without $\varepsilon$. As a consequence, this improves M\"{u}ller's exponent \eqref{muller-exponent} by removing the $\varepsilon$.

\begin{proof}[Proof of Proposition \ref{AB-process}] Let $e_1=(1,0,\cdots,0), \ldots,
e_d=(0,\cdots,0,1)$ denote the standard orthonormal basis of
$\mathbb{R}^d$, then $\mu=\sum_{l=1}^{q} e_{k_l}$, where $1\leqslant
k_l \leqslant d, 1\leqslant l \leqslant q$. Assume that $1<H\leqslant c_2M$ with a small constant $c_2$ (to be determined later) and that $M>c_2^{-1}$ (otherwise the trivial bound is better than \eqref{esti-S}). By Lemma \ref{weyl} with $r_l=e_{k_l}$, the estimation is reduced to that of $S(\mathscr{H}TM^{-q}, M; G_q, F_q)$. The $G_q$, $F_q$ are as defined in that lemma, so is the domain $\Omega_q$.

Note that supp$(G_q)\subset \Omega_q\subset c_0 B(0, 1)$ and $\textrm{dist}(\textrm{supp}(G_q), \Omega_q^{c}) \geqslant c_1$ by Lemma \ref{distance-boundary}. For all $x\in \Omega_q$ and $|\nu|\leqslant 3\lceil \frac{d}{2}\rceil+1$, we have $D^{\nu} G_q(x) \lesssim 1$, $D^{\nu} F_q(x) \lesssim 1$, and $|\textrm{det} (D^2 F_q (x))|\gtrsim 1$.

The two upper bounds are easy to get. The
lower bound needs some effort. We first have
\begin{equation}
\frac{\partial^{2}F_q}{\partial x_i \partial x_j}(x)=
\int_{(0,1)^q} (\frac{\partial^2
D^{\mu}F}{\partial x_i \partial x_j})(x+\sum_{l=1}^{q} \frac{h_l}{M} u_l
r_l) du_1 \ldots du_q \nonumber
\end{equation}
\begin{equation}
=\frac{\partial^2 D^{\mu}F}{\partial x_i
\partial x_j}(x)+O(\frac{H}{M}), \nonumber
\end{equation}
thus
\begin{equation}
|\textrm{det}(D^2 F_q (x))|= \big|
\textrm{det}\big( \frac{\partial^2 D^{\mu}F}{\partial x_i
\partial x_j}(x) \big)+O(\frac{H}{M}) \big|. \nonumber
\end{equation}
If $c_2$ is sufficiently small, the lower bound \eqref{ccc} and $H\leqslant c_2M$ imply that the above determinant is $\gtrsim 1$.

Applying Proposition \ref{B-process}, we get
\begin{equation}
S(\mathscr{H}TM^{-q}, M; G_q, F_q)\lesssim (\mathscr{H}TM^{-q})^{\frac{d}{2}}+M^{d}(\mathscr{H}TM^{-q})^{-\frac{d}{2}}, \nonumber
\end{equation}

Since $H_1\cdots H_q=H^{2-2/Q}$ and
\begin{equation}
\sum_{\substack{1\leqslant h_i<H_i \\1 \leqslant i \leqslant q}} \mathscr{H}^{\alpha}\lesssim
\bigg\{ \begin{array}{ll}
(H_1\cdots H_q)^{\alpha+1} & \textrm{if $\alpha>-1$}\\
1 & \textrm{if $\alpha<-1$}  \nonumber
\end{array},
\end{equation}
we get from Lemma \ref{weyl}
\begin{align}
|S(T, M; G, F)|^{Q}\lesssim M^{Qd} &H^{-1}+M^{(Q-1-\frac{q}{2})d}T^{\frac{d}{2}}(H^{2-2/Q})^{\frac{d}{2}}\nonumber \\
                    &+M^{(Q+\frac{q}{2})d}T^{-\frac{d}{2}}H^{-2+2/Q}. \nonumber
\end{align}
Balancing the first two terms yields the optimal choice
\begin{equation}
H^{(1-1/Q)d+1}=B_1 T^{-d/2} M^{(q+2)d/2}, \nonumber
\end{equation}
where $B_1$ can be chosen sufficiently small such that assumption \eqref{restriction-A^qB} implies $H\leqslant c_2 M$.
Due to Remark 1, we can assume $T\leqslant B_2M^{q+2}$ with a sufficiently small $B_2$, which implies $1<H$. With this choice of $H$ the third terms is $\lesssim M^{(Q-1)d}H^{d-1}\lesssim M^{Qd} H^{-1}$. Hence we get
\begin{equation}
S(T, M; G, F)\lesssim  M^d H^{-1/Q} \lesssim T^{w_{d,q}} M^{d-(q+2)w_{d,q}}, \nonumber
\end{equation}
where $w_{d,q}$ is as defined in the proposition.
\end{proof}

Next we will estimate $S(T, \delta M; G, F)$ where $\delta>0$ is a parameter. In the following theorem and its proof, we will follow the convention: if we write a $\delta$ in a subscript (e.g. $\gtrsim_{\delta}$,  $\lesssim_{\delta}$, $\asymp_{\delta}$, or $O_{\delta}$), we emphasize that the implicit constant depends on $\delta$; otherwise it does not.

The proof will proceed as follows. We first apply A-process $q$ times (Lemma \ref{weyl}) followed by a B-process, while in the latter process we use Lemma \ref{SP} to get the asymptotic expansions of certain oscillatory integrals. By looking at the leading terms, we obtain some new exponential sums to which we apply a AB-process (Proposition \ref{AB-process} with $q$ there being $1$).

Before we can apply Proposition \ref{AB-process}, however, we need some preparation in the first B-process. For instance, we use partitions of unity to restrict certain domains to small balls on which certain critical point function (if exists) is smooth; we distinguish the cases when we are allowed to use Lemma \ref{SP}; we prove nonvanishing determinants needed in two B-processes. One difficulty is to prove the nonvanishing determinants for the second B-process, and this is where we need the auxiliary condition \eqref{entry-estimate} below. In next section, we will show such condition is indeed satisfied in the lattice point problem.

After all these are settled, the $\textrm{A}^q$BAB-process finally leads to the following theorem:

\begin{theorem}[Estimate by an $\textrm{A}^q$BAB-process]\label{Exp-Sum} Assume $q\in \{1, 2\}$ if $d=3$ or $q\in \mathbb{N}$ if $d\geqslant 4$. Assume that $M>\max(1, \delta^{-1})$, dist(supp$(G)$, $\Omega^{c})\geqslant c'_1 \delta$ for some constant $c'_1$, and that for all $x\in\Omega$ and $\nu\in\mathbb{N}_{0}^{d}$ with $|\nu|\leqslant 3\lceil \frac{d}{2}\rceil+q+3$
\begin{equation}
(D^{\nu}G)(x)\lesssim \delta^{-|\nu|}\lesssim_{\delta} 1, \label{a}
\end{equation}
\begin{equation}
(D^{\nu}F)(x)\lesssim 1, \label{b}
\end{equation}

For $1\leqslant i, j\leqslant d$ denote
\begin{equation}
a_{i,j}^{(k)}(x)=\frac{\partial^{k+2}F}{\partial x_{1} \partial x_i \partial x_j \partial x_d^{k-1}
}(x).  \nonumber
\end{equation}

We further assume that for all $x\in\Omega$ and $k\in \{q, q+1\}$
\begin{equation}
|\textrm{det} (a_{i,j}^{(k)}(x))_{1\leqslant i,j\leqslant d}| \gtrsim 1 \label{c}
\end{equation}
and
\begin{equation}
\left\{
\begin{array}{ll}
|a_{i, i}^{(q)}(x)|\asymp 1   &\quad \textrm{for $1\leqslant i\leqslant d-1$} \\
|a_{i, j}^{(q)}(x)|\lesssim 1              &\quad \textrm{for $2\leqslant i\leqslant d-1$, $1\leqslant j\leqslant i-1$}\\
|a_{d, 1}^{(q)}(x)|\asymp 1                 &\quad\\
|a_{d, j}^{(q)}(x)|\lesssim \delta    &\quad \textrm{for $2\leqslant j\leqslant d$}
\end{array}.
\right. \label{entry-estimate}
\end{equation}

If $\delta$ is sufficiently small (only depending on $d$, $q$, and constants implied in \eqref{b}, \eqref{c}, and \eqref{entry-estimate}) and $T$ is restricted to
\begin{equation}
M^{q+\frac{2}{Q}-\frac{1}{d}-\frac{4}{d^2}} \leqslant T \leqslant
M^{q+\frac{2}{Q}+\frac{2}{d-2}} \quad (Q=2^q), \label{restriction}
\end{equation}
then
\begin{equation}
S(T, \delta M; G, F) \lesssim_{\delta} T^{\frac{d^2}{2(Q-1)d^2+2Qd+4Q}}
M^{d-\frac{(q+2)d^2+d}{2(Q-1)d^2+2Qd+4Q}}. \label{e}
\end{equation}
Besides $\delta$, the implicit constant in \eqref{e} depends on $d$, $q$, $c_0$, $c'_1$, and constants implied in \eqref{a}, \eqref{b}, \eqref{c}, and \eqref{entry-estimate}.
\end{theorem}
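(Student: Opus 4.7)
The plan is to execute the $A^qBAB$-process sketched just before the statement. In Step~1 (the A-process), I will apply Lemma~\ref{weyl} to $S(T,\delta M;G,F)$ with the shift vectors $r_1=e_1$ and $r_2=\cdots=r_q=e_d$; this choice makes $(r_1\cdot D)\cdots(r_q\cdot D)F=\partial_1\partial_d^{q-1}F$, whose second partial derivatives are exactly the $a_{i,j}^{(q)}$ appearing in \eqref{c} and \eqref{entry-estimate}. This will bound $|S|^Q$ by $(\delta M)^{Qd}/H$ plus an average over shifts $(h_1,\ldots,h_q)$ of sums $S(\tilde T,\delta M;G_q,F_q)$ with $\tilde T=\mathscr{H}T(\delta M)^{-q}$. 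Because $F_q$ differs from $\partial_1\partial_d^{q-1}F$ uniformly by $O(H/(\delta M))$, the hypotheses \eqref{c} for $k=q$ and \eqref{entry-estimate} will carry over to $F_q$ provided I restrict $H\le c\,\delta M$ for some small $c$.

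In Step~2 (first B-process), I will apply Poisson summation to each inner sum to produce
\begin{equation}
\sum_{p\in\mathbb{Z}^d}(\delta M)^d\int G_q(x)\, e\bigl(\tilde T F_q(x)-\delta M\,x\cdot p\bigr)\,dx. \nonumber
\end{equation}
Tail terms with $|p|$ large will be handled by integration by parts exactly as in the proof of Proposition~\ref{B-process}. For the remaining $p$, the equation $DF_q(\tilde x)=(\delta M/\tilde T)p$ defines, by \eqref{c} with $k=q$ and Lemma~\ref{app1}, a smooth $p$-parametrized critical point $\tilde x(p)$ in a neighborhood of $\mathrm{supp}(G_q)$ on which I can apply Lemma~\ref{SP}. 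After a partition of unity around each $\tilde x(p)$ and collection of leading terms, the inner sum will become, up to controllable remainders,
\begin{equation}
(\delta M)^d\,\tilde T^{-d/2}\,S(\tilde T,M^\sharp;G^\sharp,F^\sharp),\nonumber
\end{equation}
where $M^\sharp=\tilde T/(\delta M)$, $F^\sharp$ is essentially the Legendre transform of $F_q$, and $G^\sharp$ absorbs the stationary-phase amplitude; derivative bounds on $G^\sharp,F^\sharp$ follow from \eqref{a}, \eqref{b}, any $\delta$-powers going into the implicit constants.

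In Step~3 (second AB-process), I will apply Proposition~\ref{AB-process} with $q'=1$ to $S(\tilde T,M^\sharp;G^\sharp,F^\sharp)$. I expect this to be the main obstacle, because its hypothesis \eqref{ccc}---$|\det(\partial_i\partial_j\partial^{\nu'}F^\sharp)|\gtrsim 1$ for some $|\nu'|=1$---does not come for free from \eqref{c} alone. Differentiating the Legendre identity $D^2F^\sharp=(D^2F_q(\tilde x))^{-1}$ in direction $\nu'=e_k$ yields
\begin{equation}
\partial_k D^2F^\sharp=-(D^2F^\sharp)\Bigl(\sum_{c=1}^d(\partial_c D^2F_q)\,(D^2F^\sharp)_{ck}\Bigr)(D^2F^\sharp),\nonumber
\end{equation}
so $\det(\partial_k D^2F^\sharp)$ is a sum of determinants indexed by $c$. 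For $k$ chosen so that $\tilde p_k$ is dual to the primal first coordinate, the structural hypothesis \eqref{entry-estimate}---a symmetric matrix with diagonal $\asymp 1$ except for a $(d,d)$-entry of size $O(\delta)$, with $O(1)$ coupling to the last coordinate only through the $(1,d)$ and $(d,1)$ positions---will force the column $(D^2F^\sharp)_{\cdot k}$ of the inverse to concentrate on $c=d$ up to $O(\delta)$. The leading contribution to $\det(\partial_k D^2F^\sharp)$ then equals a nonzero scalar times $\det(\partial_d D^2F_q)=\det(a_{i,j}^{(q+1)})$, which is $\gtrsim 1$ by \eqref{c} with $k=q+1$, while the remaining contributions are $O(\delta)$ and absorbed for $\delta$ sufficiently small. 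Substituting $\tilde T$ and $M^\sharp$ shows that hypothesis \eqref{restriction-A^qB} of Proposition~\ref{AB-process} with $q'=1$ reduces to the upper bound in \eqref{restriction}.

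In Step~4 (balancing), I will chain the $A^1B$-estimate from Step~3 back through Steps~2 and~1 and optimize $H$ to equalize the two dominant terms in the resulting Weyl bound for $|S|^Q$; taking $Q$-th roots will then yield \eqref{e}. The lower bound in \eqref{restriction} is precisely what will guarantee that the optimal $H$ lies in the admissible range $1<H\le c\,\delta M$ and that the stationary-phase remainders in Step~2 are dominated by the main term. Aside from the determinant computation in Step~3, which is the only genuinely new ingredient, the rest is parameter bookkeeping modeled on the proof of Proposition~\ref{AB-process}.
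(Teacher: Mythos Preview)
Your proposal is correct and follows essentially the same approach as the paper's proof: the same choice of shift vectors $r_1=e_1$, $r_2=\cdots=r_q=e_d$ in Lemma~\ref{weyl}, the same Poisson-plus-stationary-phase treatment producing a dual sum $S(\widetilde T,\widetilde M;\widetilde G,\widetilde F)$ with $\widetilde M=\widetilde T/(\delta M)$, the same application of Proposition~\ref{AB-process} with $q'=1$ and $\mu=e_1$, and the same Legendre-transform computation showing that the first column of $(D^2F_q)^{-1}$ concentrates on the $d$-th entry so that $\det(\partial_1 D^2\widetilde F)$ is governed by $\det(a_{i,j}^{(q+1)})$. The paper carries out the partition-of-unity step slightly differently (separate partitions in both the $x$- and the dual $z$-variables to localize the critical-point map), and handles the edge cases $\widetilde T<1$ and $\widetilde M<1$ explicitly, but these are routine and your outline covers all the substantive points.
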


{\it Remarks:} 1) If $T\gtrsim M^{q+2+1/d}$, the trivial bound $S\lesssim M^d$ is better than above estimate.

2) In our later application of this theorem, we will let $q=1$ if $d\geqslant 4$ and $2$ if $d=3$; we will choose and fix a sufficiently small $\delta$ and we don't need it explicitly in the bound \eqref{e}.

\begin{proof}[Proof of Theorem \ref{Exp-Sum}]  Assume that $1<H\leqslant c_2\delta M$ with a small constant $c_2$ (to be determined later) and that $\delta M>c_2^{-1}$ (otherwise the trivial bound is better than \eqref{e}). Using Lemma \ref{weyl} with $r_1=e_1$ and $r_l=e_{d}$ ($2\leqslant l\leqslant q$), we get
\begin{equation}
\begin{split}
|S(T, \delta M; G, F)|^{Q}\lesssim &\frac{(\delta M)^{Qd}}{H} + \frac{(\delta M)^{(Q-1)d}}{H_1\cdots H_q} \cdot\\
&\sum_{\substack{1\leqslant h_i<H_i \\1 \leqslant i \leqslant q}} |S(\mathscr{H}T(\delta M)^{-q}, \delta M; G_q, F_q)|,
\end{split}\label{weyl-in-theorem}
\end{equation}
where $G_q$, $F_q$ are as defined in Lemma \ref{weyl}, so is the domain $\Omega_q$. Applying to the innermost sum the $d$-dimensional Poisson summation formula followed by a change of variables yields
\begin{align}
S(\mathscr{H}&T(\delta M)^{-q}, \delta M; G_q, F_q)\nonumber\\
&=\sum_{p\in \mathbb{Z}^d} (\delta M)^d \int
G_q(x) e\big(\mathscr{H}T(\delta M)^{-q}F_q(x)-\delta M x\cdot p\big) dx, \label{dddd}
\end{align}

Lemma \ref{weyl} and \ref{distance-boundary} imply
\begin{displaymath}
\textrm{supp}(G_q)\subset \Omega_q \subset c_0B(0, 1)
\end{displaymath}
and
\begin{equation}
\textrm{dist}(\textrm{supp}(G_q), \Omega_q^{c}) \geqslant c'_1 \delta. \nonumber
\end{equation}
By \eqref{b}, there exists a sufficiently large constant $A_0$ (independent of $\delta$) such that for any $x\in \Omega_q$
\begin{equation}
|DF_q(x)|\leqslant A_0/2.  \nonumber
\end{equation}
Define $\widetilde{M}=\mathscr{H} T (\delta M)^{-q-1}$. We split \eqref{dddd} into two parts
\begin{equation}
S(\mathscr{H}T(\delta M)^{-q}, \delta M; G_q, F_q)=\sum_{|p|\geqslant A_0 \widetilde{M}}+\sum_{|p|<A_0 \widetilde{M}}=:\textrm{I}+\textrm{II}. \nonumber
\end{equation}
and estimate them separately.

{\bf Case 1.} If $|p|\geqslant A_0 \widetilde{M}$. Like what we did in the proof of Proposition \ref{B-process}, it is easy to prove that sum $\textrm{I}\lesssim_{\delta} M^{-1}$.

{\bf Case 2.} If $|p|< A_0 \widetilde{M}$. Let $\widetilde{T}=\delta M \widetilde{M}$\footnote{The reason why we introduce new parameters $\widetilde{T}$ and $\widetilde{M}$ will be clear later.}.

{\bf Subcase 2.1} If $\widetilde{T}\geqslant 1$. Define
\begin{equation}
\Phi_q(x, z)=F_q(x)-x\cdot z \quad \textrm{$x\in \Omega_q$, $z\in \mathbb{R}^d$}, \nonumber
\end{equation}
then
\begin{equation}
\textrm{II}=(\delta M)^d \sum_{|p|<A_0 \widetilde{M}} \int G_q(x) e(\widetilde{T}
\Phi_q(x, \frac{p}{\widetilde{M}})) dx.  \nonumber
\end{equation}

For all $x\in \Omega_q$, $|z|<A_0$, and $|\nu|\leqslant 3\lceil \frac{d}{2}\rceil+3$, it is easy to see that
\begin{equation}
D_{x}^{\nu} G_q (x) \lesssim_{\delta} 1,  \label{resupbow}
\end{equation}
\begin{equation}
D_{x}^{\nu} \Phi_q(x, z) \lesssim 1,  \label{resupbo1}
\end{equation}
and if $c_2$ is sufficiently small (depending on constants implied in \eqref{b}, \eqref{c}) condition \eqref{c} with $k=q$ implies
\begin{equation}
\big|\textrm{det} (D^2_{xx}\Phi_q(x, z))\big|\gtrsim 1.  \label{i}
\end{equation}

Denote $f(x)=DF_q(x)$, then $
D_x \Phi_q(x, z)=f(x)-z$. If we can solve the following equation in $x$-variable for a particular $|z|<A_0$
\begin{equation}
f(x)=z \quad \textrm{for $x\in \Omega_q$}, \nonumber %\label{defining-equ}
\end{equation}
we call the solution a critical point (with respect to $z$).

We know that $\textrm{supp}(G_q)$ is strictly smaller than $\Omega_q$ and the distance between their boundary is larger than $c'_1 \delta$. Let $r_0=c'_1 \delta/2$. By the Taylor's formula, there exists $r_*$ ($<r_0$) such that if $\widetilde{x}$ is a critical point in $(\textrm{supp}(G_q))_{(r_0)}$ with respect to $z$ then
\begin{equation}
|D_x\Phi_q(x, z)|\gtrsim |x-\widetilde{x}| \quad \textrm{for $x\in B(\widetilde{x}, r_*)$}. \label{claim2}
\end{equation}
The implicit constant depends on $d$, constants implied in \eqref{resupbo1}, \eqref{i}.

Applying Lemma \ref{app1} to $f$ with above $r_0$ yields two positive numbers $r_1$, $r_2$ (in particular both depending on $\delta$) such that $2r_1\leqslant r_*$ and for any $x\in (\textrm{supp}(G_q))_{(r_0)}$, $f$ is bijective from $B(x, 2r_1)$ to an open set containing $B(f(x), 2r_2)$. Note that $r_1<c'_1\delta/4$. If $x_1$, $x_2$ $\in (\textrm{supp}(G_q))_{(r_0)}$ are two different critical points with respect to $z$ (if exist), then $B(x_1, r_1)$ and $B(x_2, r_1)$ are disjoint and contained in $\Omega_q$.

\medskip

Next we will use two partitions of unity to restrict the domains for both $x$ and $z$ to small balls. We can choose finitely many balls $\{X_k\}_{k=1}^{K}$ and $\{Z_s\}_{s=1}^{S}$ (from families $\{B(x, r_1/3): x\in c_0B(0, 1)\}$ and $\{B(z, r_2/3): z\in (A_0/2) B(0, 1)\}$ respectively) and two families of functions $\{\phi_k\}_{k=1}^{K}$ and $\{\eta_s\}_{s=1}^{S}$ such that 
\begin{enumerate}
\item  $c_0B(0, 1)\subset \cup_{k=1}^{K} X_k$ and $(A_0/2) B(0, 1)\subset \cup_{s=1}^{S} Z_s$;
\item  $K$ and $S$ are both bounded above by some constants;
\item  $\sum_{k=1}^{K}\phi_k(x)\equiv 1$ if $x \in c_0B(0, 1)$ and $\phi_k \in C_0^{\infty}(X_k)$;
\item  $\sum_{s=1}^{S}\eta_s(z)\equiv 1$ if $z \in (A_0/2) B(0, 1)$ and $\eta_s \in C_0^{\infty}(Z_s)$.
\end{enumerate}
Denote $\eta_0=1-\sum_{s=1}^{S}\eta_s$. Adding these cut-off functions, we get
\begin{equation}
\textrm{II}=(\delta M)^d \sum_{k=1}^{K}\sum_{s=0}^{S}\textrm{III}(k, s),  \nonumber
\end{equation}
where
\begin{equation}
\textrm{III}(k, s)=\sum_{|p|<A_0 \widetilde{M}} \eta_s\big(\frac{p}{\widetilde{M}}\big)\int \phi_k(x) G_q(x) e(\widetilde{T}
\Phi_q(x, \frac{p}{\widetilde{M}})) dx.  \label{bb}
\end{equation}

Let's fix arbitrarily $0\leqslant s\leqslant S$, $1\leqslant k\leqslant K$ and estimate this sum III. Denote $E_k=\textrm{supp}(\phi_k) \cap \textrm{supp}(G_q)$. We will only consider those $k$'s such that $E_k\neq \emptyset$, otherwise the integrals above vanish.

For $|z|<A_0$ define
\begin{equation}
S_z=\{x\in E_k: |D_x\Phi_q(x, z)|<r_2/3\}. \nonumber
\end{equation}

If $S_z$ is empty for a $z$, $|D_x\Phi_q(x, z)|$ has a lower bound $r_2/3$ on $E_k$. As a consequence, for some $p$ with nonempty $S_{p/\widetilde{M}}$ the integral in sum III$(k, s)$ is of order $O_{\delta}(\widetilde{T}^{-\lceil \frac{d}{2}\rceil-1})$ by integration by parts.

If $S_z$ is not empty for a $z$, Lemma \ref{app1} ensures that there exists a unique critical point $x(z)$ in $(E_k)_{(r_1/3)}\subset \Omega_q$. % and $f$ is bijective from $B(x(z), r_1)$ to an open set containing $B(z, r_2)$. Observe that $E_k\subset B(x(z), r_1)\subset \Omega_q$. % followed by $\textrm{dist} (E_k, B(x(z), r_1)^{c})\geqslant r_1/4$.

If $s=0$, we actually sum over all integral $p$'s such that $A_0\widetilde{M}/2<|p|< A_0\widetilde{M}$. For those $p$'s, $D_x\Phi_q(x, p/\widetilde{M})\neq 0$ for $x\in\Omega_q$. It follows that $S_{p/\widetilde{M}}$ is empty, hence each integral in \eqref{bb} is of order $O_{\delta}(\widetilde{T}^{-\lceil \frac{d}{2}\rceil-1})$. Thus III$(k, 0)$ $\lesssim_{\delta}(1+\widetilde{M}^d) \widetilde{T}^{-\lceil \frac{d}{2}\rceil-1}$.

Now let's assume $s\geqslant 1$. Since $\eta_s$ is compactly supported we can replace the summation domain in sum III by $\{p\in \mathbb{Z}^d\}$. Assume there exists a $p_1\in \mathbb{Z}^d$ such that $\eta_s(p_1/\widetilde{M})\neq 0$ and $S_{p_1/\widetilde{M}}$ is not empty. Hence the critical point $x(p_1/\widetilde{M})$ exists in $(E_k)_{(r_1/3)}$. It follows that for every $z\in B(p_1/\widetilde{M}, r_2)$, critical point $x(z)$ exists in $B(x(p_1/\widetilde{M}), r_1)$ and is smooth on $B(p_1/\widetilde{M}, r_2)$. Since $\textrm{supp} (\eta_s)$ $\subset Z_s \subset B(p_1/\widetilde{M}, 2r_2/3)$, $\textrm{dist}$$\{\textrm{supp}(\eta_s)$, we have $B(p_1/\widetilde{M}, r_2)^{c} \}$$\geqslant r_2/3$.

We also have $E_k\subset B(x(z), 2r_1)\subset \Omega_q$ for any $z\in B(p_1/\widetilde{M}, r_2)$. Recalling the \eqref{claim2} and applying Lemma \ref{SP}\footnote{The $K$, $X$ in that lemma can be chosen to be $E_k$, $B(x(p/\widetilde{M}), 2r_1)$ respectively.} yield
\begin{equation}
\textrm{III}(k, s)=\widetilde{T}^{-\frac{d}{2}} S(\widetilde{T}, \widetilde{M}; \widetilde{G}, \widetilde{F})+O_{\delta}(\sum_{p\in \mathbb{Z}^d} \eta_s(\frac{p}{\widetilde{M}})\widetilde{T}^{-\frac{d}{2}-1}), \nonumber
\end{equation}
where
\begin{equation}
\widetilde{G}(z)=\eta_s(z) \phi_k(x(z)) G_q(x(z)) |\textrm{det} (Q(z))|^{-\frac{1}{2}} ,\nonumber
\end{equation}
\begin{equation}
\widetilde{F}(z)=\Phi_q(x(z), z)+\textrm{sgn} (Q(z))/(8\widetilde{T}), \nonumber
\end{equation}
and $Q(z)=D_{xx}^{2}\Phi_q(x(z),z)$. Denote the domain of $\widetilde{G}$ by $\mathscr{D}$, whose possible choice is $B(p_1/\widetilde{M}, r_2)$. It satisfies $\textrm{supp}(\widetilde{G}) \subset \mathscr{D} \subset A_0B(0, 1)$ and $\textrm{dist}\{\textrm{supp}(\widetilde{G}), \mathscr{D}^{c} \}\geqslant r_2/3$.

Now we need to estimate the new exponential sum $S(\widetilde{T}, \widetilde{M}; \widetilde{G}, \widetilde{F})$. We first make the following claim:
\begin{claim}\label{verify-cond-thm}For all $z\in\mathscr{D}$ and $|\nu|\leqslant 3\lceil \frac{d}{2}\rceil+2$, the following bounds
\begin{equation}
(D^{\upsilon} \widetilde{G})(z) \lesssim_{\delta} 1, \nonumber
\end{equation}
\begin{equation}
(D^{\upsilon} \widetilde{F})(z) \lesssim 1 \nonumber
\end{equation}
hold. Furthermore, if $\delta$ and $c_2$ are sufficiently small (both depending on $d$ and constants implied in \eqref{b}, \eqref{c}, and \eqref{entry-estimate}), then
\begin{equation}
|\textrm{det} \big(D^3_{1,i,j}\widetilde{F}(z)\big)_{1\leqslant i,j\leqslant d}|\gtrsim 1. \nonumber
\end{equation}
In particular, all three constants implied in these bounds are independent of the choice of domain $\mathscr{D}$.
\end{claim}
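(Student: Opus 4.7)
The plan splits the three assertions by difficulty. The derivative bounds on $\widetilde F$ and $\widetilde G$ are routine once the implicit map $z\mapsto x(z)$ defined by $DF_q(x(z))=z$ is shown to be $C^\infty$ with all partials uniformly bounded; this follows by implicit differentiation from $|\det Q(z)|\gtrsim 1$ (established in \eqref{i} for small $c_2$) together with \eqref{b}. The chain rule then yields $|D^\upsilon\widetilde F|\lesssim 1$; for $\widetilde G=\eta_s\cdot(\phi_k G_q)\circ x\cdot|\det Q|^{-1/2}$, the only factor carrying $\delta$-scaling is $G_q\circ x$, which inherits the bound in \eqref{a} and gives $|D^\upsilon\widetilde G|\lesssim_\delta 1$.

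The main obstacle is the lower bound on $\det(D^3_{1,i,j}\widetilde F)$. A direct computation from $D_z\widetilde F=-x(z)$ and $Dx(z)=Q(z)^{-1}$ gives
\begin{equation}
D^3_{1,i,j}\widetilde F(z)=\bigl[Q^{-1}(\partial_{z_1}Q)Q^{-1}\bigr]_{i,j}, \nonumber
\end{equation}
so $\det(D^3_{1,i,j}\widetilde F)=(\det Q)^{-2}\det(\partial_{z_1}Q)$. Differentiating $Q_{a,b}(z)=\partial^2_{x_ax_b}F_q(x(z))$ in $z_1$ and using $\partial_{z_1}x_c=[Q^{-1}]_{c,1}$ produces the expansion $\partial_{z_1}Q=\sum_{c=1}^{d}[Q^{-1}]_{c,1}T^{(c)}$ with $T^{(c)}_{a,b}:=\partial^3_{x_ax_bx_c}F_q(x(z))$. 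Everything thus reduces to understanding the first column of $Q^{-1}$.

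The hard step, and the only place where \eqref{entry-estimate} enters, is to show that this column concentrates on its last entry. For small $c_2$ one has $Q_{i,j}=a^{(q)}_{i,j}(x(z))+O(H/M)$, so by \eqref{entry-estimate} the $d$-th row and column of $Q$ are $O(\delta)$ except at positions $(1,d)$ and $(d,1)$, which are $\asymp 1$. Writing $v=Q^{-1}e_1$ and reading $Qv=e_1$ row by row: row $d$ gives $|v_1|\lesssim\delta$; rows $2,\dots,d-1$ then form a $(d-2)\times(d-2)$ linear system in $(v_2,\dots,v_{d-1})$ with $O(\delta)$ right-hand side, and a cofactor expansion of $\det Q$ along the $d$-th row and then along column $d$ of the resulting minor gives $\det Q=-Q_{d,1}^2\det\widetilde Q+O(\delta)$, where $\widetilde Q$ is the central $(d-2)\times(d-2)$ block. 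Combined with $|\det Q|\gtrsim 1$ and $|Q_{d,1}|\asymp 1$, this forces $|\det\widetilde Q|\gtrsim 1$, whence $v_j=O(\delta)$ for $2\le j\le d-1$; the first row of $Qv=e_1$ then gives $v_d=1/Q_{1,d}+O(\delta)\asymp 1$.

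With $[Q^{-1}]_{c,1}=O(\delta)$ for $c<d$ and $[Q^{-1}]_{d,1}\asymp 1$ in hand, only the $c=d$ term survives to leading order in $\partial_{z_1}Q$. Since the choice $\mu=e_1+(q-1)e_d$ made in the proof of Theorem \ref{Exp-Sum} yields $\partial^3_{x_ax_bx_d}D^\mu F=a^{(q+1)}_{a,b}$, and since $F_q$ agrees with $D^\mu F$ to $O(H/M)$ even after three derivatives, one arrives at $\partial_{z_1}Q=[Q^{-1}]_{d,1}\bigl(a^{(q+1)}_{i,j}(x(z))\bigr)_{i,j}+O(\delta+H/M)$. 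Taking determinants and invoking \eqref{c} with $k=q+1$ gives $|\det(\partial_{z_1}Q)|\gtrsim 1$ for $\delta,c_2$ small enough, and hence $|\det(D^3_{1,i,j}\widetilde F)|\gtrsim 1$. All implicit constants depend only on the global data in \eqref{b}, \eqref{c}, and \eqref{entry-estimate}, so none of them sees the choice of domain $\mathscr{D}$.
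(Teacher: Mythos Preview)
Your proposal is correct and follows essentially the same route as the paper: both compute $D_z\widetilde F=-x(z)$, reduce the determinant lower bound to $|\det(\partial_{z_1}Q)|\gtrsim 1$, expand $\partial_{z_1}Q=\sum_c [Q^{-1}]_{c,1}\,T^{(c)}$, and then use \eqref{entry-estimate} to show that the first column of $Q^{-1}$ is $O(\delta)$ in entries $1,\dots,d-1$ and $\asymp 1$ in entry $d$, so that the $c=d$ term dominates and \eqref{c} with $k=q+1$ finishes the argument. Your cofactor expansion yielding $\det Q=-Q_{d,1}^2\det\widetilde Q+O(\delta)$ and hence $|\det\widetilde Q|\gtrsim 1$ is a clean justification of a step the paper only asserts, but it is an elaboration of the same idea rather than a different approach.
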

We defer the proof of this claim until later.

\medskip

If $\widetilde{M}\geqslant 1$ assume $c_2$ is sufficiently small (depending on $d$, $q$, and $\delta$), then the assumption $T \leqslant M^{q+2/Q+2/(d-2)}$ implies
\begin{displaymath}
\widetilde{T}\geqslant \widetilde{M}^{2-\frac{2}{d}}.
\end{displaymath}
Hence we are allowed to apply Proposition \ref{AB-process} (The $q$, $\mu$ there can be taken to be $1$, $e_1$ respectively.) and get
\begin{equation}
S(\widetilde{T}, \widetilde{M}; \widetilde{G}, \widetilde{F}) \lesssim_{\delta} \widetilde{T}^{\frac{d}{2(d+2)}}\widetilde{M}^{d-\frac{3d}{2(d+2)}}.
\nonumber
\end{equation}

If $\widetilde{M}\leqslant 1$, the trivial estimate gives
\begin{equation}
S(\widetilde{T}, \widetilde{M}; \widetilde{G}, \widetilde{F})\lesssim 1.\nonumber
\end{equation}

Combining these two bounds, we get
\begin{equation}
S(\widetilde{T}, \widetilde{M}; \widetilde{G}, \widetilde{F}) \lesssim_{\delta} 1+\widetilde{T}^{\frac{d}{2(d+2)}}\widetilde{M}^{d-\frac{3d}{2(d+2)}}.
\nonumber
\end{equation}
Finally, we get the bound for sum II in this subcase:
\begin{equation}
\textrm{II}  \lesssim_{\delta} (\delta M)^d [\widetilde{T}^{-\frac{d}{2}}(1+\widetilde{T}^{\frac{d}{2(d+2)}}\widetilde{M}^{d-\frac{3d}{2(d+2)}})+(1+\widetilde{M}^d)(\widetilde{T}^{-\frac{d}{2}-1}+\widetilde{T}^{-\lceil \frac{d}{2}\rceil-1})] \nonumber
\end{equation}
\begin{equation}
\lesssim_{\delta} M^{\frac{d^2+3d}{2(d+2)}}\widetilde{M}^{\frac{d^2}{2(d+2)}}+M^{\frac{d}{2}-1}\widetilde{M}^{\frac{d}{2}-1}
+M^{\frac{d}{2}}\widetilde{M}^{-\frac{d}{2}},\nonumber
\end{equation}
\begin{equation}
\lesssim_{\delta} M^{\frac{d^2+3d}{2(d+2)}}\widetilde{M}^{\frac{d^2}{2(d+2)}}+M^{\frac{d}{2}}\widetilde{M}^{-\frac{d}{2}}.\label{esti-II}
\end{equation}

In the second inequality, we use $\widetilde{T}=\delta M \widetilde{M}\geqslant 1$. In the last inequality, we omit the second term since
\begin{displaymath}
M^{\frac{d^2+3d}{2(d+2)}}\widetilde{M}^{\frac{d^2}{2(d+2)}}=M^{\frac{3d}{2(d+2)}}(M\widetilde{M})^{\frac{d^2}{2(d+2)}}
\gtrsim_{\delta}(M\widetilde{M})^{\frac{d}{2}-1}.
\end{displaymath}

{\bf Subcase 2.2} If $\widetilde{T}<1$, which implies $\delta M<\widetilde{M}^{-1}$ and $\widetilde{M}<1$. Hence
\begin{equation}
\textrm{II}\lesssim_{\delta} (\delta M)^d\lesssim_{\delta} M^{\frac{d}{2}}\widetilde{M}^{-\frac{d}{2}}.\nonumber
\end{equation}

Comparing this bound with the bound \eqref{esti-II}, we conclude that \eqref{esti-II} always holds for sum II.

Using the bounds for sum I and II, we get
\begin{equation}
S(\mathscr{H}T(\delta M)^{-q}, \delta M; G_q, F_q)  \lesssim_{\delta} M^{-1}+M^{\frac{d^2+3d}{2(d+2)}}\widetilde{M}^{\frac{d^2}{2(d+2)}}+M^{\frac{d}{2}}\widetilde{M}^{-\frac{d}{2}}\nonumber
\end{equation}
\begin{equation}
\lesssim_{\delta} \mathscr{H}^{\frac{d^2}{2(d+2)}}T^{\frac{d^2}{2(d+2)}}M^{\frac{-qd^2+3d}{2(d+2)}} +\mathscr{H}^{-\frac{d}{2}}T^{-\frac{d}{2}}M^{\frac{q+2}{2}d}. \nonumber
\end{equation}
In the last step, we use definition of $\widetilde{M}$ and omit $M^{-1}$ since it is smaller than the sum of the other two no matter whether $\widetilde{M}\geqslant 1$ or $<1$.

Plugging this bound into \eqref{weyl-in-theorem} yields
\begin{equation}
\begin{split}
|S(T, \delta M; G, &F)|^{Q} \lesssim_{\delta} M^{Qd}\big(H^{-1}+\\
&M^{-\frac{(q+2)d^2+d}{2(d+2)}}
T^{\frac{d^2}{2(d+2)}}
H^{\frac{(1-1/Q)d^2}{d+2}}+T^{-\frac{d}{2}}M^{\frac{q}{2}d}H^{-2+2/Q}\big).\nonumber
\end{split}
\end{equation}
Balancing the first two terms yields the optimal choice
\begin{equation}
H=B_3 T^{-\frac{d^2}{(2-2/Q)d^2+2d+4}} M^{\frac{(q+2)d^2+d}{(2-2/Q)d^2+2d+4}}, \nonumber
\end{equation}
where $B_3$ can be chosen so small that $T\geqslant M^{q+2/Q-1/d-4/d^2}$ implies $H\leqslant c_2 \delta M$. Due to Remark 1, we can assume $T\leqslant B_4 M^{q+2+1/d}$ with a sufficiently small $B_4$, which implies
$1< H$.

For $q\in \{1, 2\}$ if $d=3$ or $q\in \mathbb{N}$ if $d\geqslant 4$, we have
\begin{equation}
T^{-\frac{d}{2}}M^{\frac{q}{2}d}H^{-2+2/Q}\lesssim_{\delta} M^{-d-\frac{1}{2}}H^{d-\frac{1}{2}} \lesssim_{\delta} H^{-1}. \nonumber
\end{equation}
Hence
\begin{equation}
S(T, \delta M; G, F) \lesssim_{\delta} T^{\frac{d^2}{2(Q-1)d^2+2Qd+4Q}}
M^{d-\frac{(q+2)d^2+d}{2(Q-1)d^2+2Qd+4Q}}. \nonumber
\end{equation}
This finishes the proof of this theorem.
\end{proof}

\begin{proof} [Proof of Claim \ref{verify-cond-thm}] Let's consider $z\in \mathscr{D}$. The critical point function $x(z)=(x_1(z), \ldots, x_d(z))$ satisfies the equation $D_x \Phi_q(x(z), z)=0$, namely
\begin{equation}
D_x F_q(x(z))-z=0. \nonumber
\end{equation}

Differentiating this equation gives
\begin{equation}
D_{xx}^2F_q(x(z)) D_z x(z)-I_d=0  \nonumber
\end{equation}
where $I_d$ is the unit matrix of size $d$, hence
\begin{equation}
D_z x(z)=(D_{xx}^2F_q(x(z)))^{-1}.
\label{l}
\end{equation}

By differentiating this formula inductively and using bounds \eqref{b}, \eqref{i} for $F_q$, we get
\begin{equation}
D_z^{\upsilon} x_{i}(z) \lesssim 1 \quad
\textrm{for $1\leqslant i \leqslant d$, $z\in\mathscr{D}$, and $|\nu|\leqslant 3\lceil \frac{d}{2}\rceil+2$}. \nonumber
\end{equation}

This bound together with the chain rule and product rule gives us the two upper bounds in the claim. To prove the lower bound of det$(D^3_{1,i,j}\widetilde{F})$, we first have
\begin{align}
D_z \widetilde{F}(z) &=D_z\big(F_q(x(z))-x(z)\cdot z+\textrm{sgn} (Q(z))/(8\widetilde{T})\big)  \nonumber\\
      &=-x(z)+D_z x(z)[D_x F_q(x(z))-z]=-x(z). \nonumber
\end{align}
Derivative of $\textrm{sgn} (Q(z))$ vanishes since it is a constant function and the last equality follows from the defining equation of critical points. Thus
\begin{align}
\big(D^3_{1,i,j}\widetilde{F}(z)\big)_{1\leqslant i,j\leqslant d} &=
-\frac{\partial}{\partial z_{1}}\big( D_z x(z)\big) \nonumber \\
&=(D_{xx}^2F_q)^{-1}\frac{\partial}{\partial
z_{1}}(D_{xx}^2F_q(x(z)))
(D_{xx}^2F_q)^{-1}. \nonumber
\end{align}
In last step we use \eqref{l}. Hence we get the desired lower bound if we can prove
\begin{equation}
|\textrm{det} \big(\frac{\partial}{\partial
z_{1}}(D_{xx}^2F_q(x(z)))\big)|\gtrsim 1. \label{anothermatrix}
\end{equation}

If $\delta$ is sufficiently small and $H\leqslant \delta^2 M$, condition \eqref{entry-estimate} ensures the following bounds for entries of the symmetric matrix $D_{xx}^2F_q$: $|D^{2}_{i, i}F_q|\asymp 1$ for $1\leqslant i\leqslant d-1$; $D^{2}_{i, j}F_q\lesssim 1$ for $2\leqslant i\leqslant d-1$, $1\leqslant j\leqslant i-1$; $|D^{2}_{d, 1}F_q|\asymp 1$; $D^{2}_{d, j}F_q\lesssim \delta$ for $2\leqslant j\leqslant d$.

We can then estimate the entries of $D_z x(z)=(D_{xx}^2F_q(x(z)))^{-1}$. Actually, we only need to consider the first column of $D_z x(z)$. We have $\frac{\partial x_i}{\partial z_{1}} \lesssim \delta$ for $1\leqslant i\leqslant d-1$. If $\delta$ is sufficiently small, we have $|\frac{\partial x_d}{\partial z_{1}}| \asymp 1$.

If $H\leqslant c_3 \delta M$ with a sufficiently small $c_3$ (depending on $d$ and constants implied in \eqref{b}, \eqref{c}), condition \eqref{c} with $k=q+1$ implies
\begin{equation}
\big|\textrm{det}\big(D^3_{i, j, d} F_q \big)_{1\leqslant i,j\leqslant d}\big| \gtrsim 1. \nonumber
\end{equation}

Note that
\begin{equation}
\frac{\partial}{\partial
z_{1}}(D_{xx}^2F_q(x(z)))
=\sum_{l=1}^d \big(D^3_{i, j, l} F_q(x(z)) \big)_{1\leqslant i,j\leqslant d}
\frac{\partial x_l(z)}{\partial z_{1}}. \nonumber
\end{equation}

If $\delta$ is sufficiently small, then $\frac{\partial x_l}{\partial z_{1}}$'s ($1\leqslant l\leqslant d-1$)
are relatively smaller than $\frac{\partial x_d}{\partial z_{1}}$. The terms with $l=d$ in above summation overweigh the others and this leads to \eqref{anothermatrix}.

$\delta$ only depends on $d$ and constants implied in \eqref{b}, \eqref{c}, and \eqref{entry-estimate}. We require $c_2$ to be smaller than $\delta$ and $c_3$, and it depends on the same quantities as $\delta$ does. From the argument we can see that all bounds are independent of the choice of $\mathscr{D}$.
\end{proof}

\section{Nonvanishing of $d\times d$ determinants}\label{nonvanishing}
In this section, we will give lower bounds of determinants of certain $d\times d$ matrices and
description of sizes of their entries. These results are obtained based on M\"{u}ller \cite{mullerII} Lemma 3 and its proof.

For $\xi\neq 0$, let $H(\xi)=\sup_{x\in \mathcal{B}}\langle \xi, x\rangle$. It is a real-valued function positively homogeneous of degree $1$, i.e. $H(k \xi)=kH(\xi)$ if $k>0$. Due to the curvature condition imposed on $\partial \mathcal{B}$, $H$ is smooth and the eigenvalues of Hessian matrix of $H$ at $\xi\neq 0$ are $0$ and $(d-1)$ real numbers comparable to $1/|\xi|$. This simple fact is not hard to prove and the reader can also check \cite{Bonnesen}.

Given any $d$ vectors $v_1, \ldots, v_d$, by writing $V=(v_1, \ldots, v_d)$ we mean $V$ is the matrix with column vectors $v_1, \ldots, v_d$. If $y\neq 0$ we define $F(u_1, \ldots, u_d)=H(y+\sum_{l=1}^{d}u_l v_l)$, $u_l\in \mathbb{R}$ $(1\leqslant l\leqslant d)$. For $1\leqslant i, j \leqslant d$ and $k\in \mathbb{N}$, define
\begin{equation}
g_{i, j}^{(k)}(y, v_1, \ldots, v_d)=\frac{\partial^{k+2}F}{\partial
u_1 \partial u_i \partial u_j \partial u_d^{k-1}}(0), \nonumber
\end{equation}
which form a symmetric matrix
\begin{equation}
G_{k}(y, v_1, \ldots, v_d)=\big(g_{i, j}^{(k)}(y, v_1, \ldots, v_d)\big)_{1\leqslant i, j\leqslant d}\nonumber
\end{equation}
with determinant
\begin{equation}
h_k(y, v_1, \ldots, v_d)=\textrm{det}(G_{k}(y, v_1, \ldots, v_d)). \nonumber
\end{equation}

Denote
\begin{equation}
\mathscr{C}_1=\{x\in \mathbb{R}^d: 1/2 \leqslant |x| \leqslant 2 \}, \nonumber
\end{equation}
\begin{equation}
\mathscr{C}_1^{+}=\{x\in \mathbb{R}^d: 1/4 \leqslant |x| \leqslant 4 \}. \nonumber
\end{equation}
Since $H$ is smooth we can assume its derivatives up to order $q+3$ on $\mathscr{C}_1^{+}$ are bounded by a constant (only depending on $q$ and $\mathcal{B}$):
\begin{equation}
D^{\nu}H(\xi)\lesssim 1 \quad \textrm{for all $\xi\in \mathscr{C}_1^{+}$ and $|\nu|\leqslant q+3$}.\label{final-depend}
\end{equation}
We will only consider points in $\mathscr{C}_1$ in the following lemma.

\begin{lemma}\label{keylemma} Assume $q$ and $N$ are both positive integers. There exists $A_3>0$ (depending on $q$ and $\mathcal{B}$) such that if $N\geqslant A_3$ then for every $\xi\in \mathscr{C}_1$ there exist $d$
linearly independent vectors $v_1, \ldots, v_d\in \mathbb{Z}^d$
$($depending on $\xi)$ such that $|v_l|\asymp N$ $(1\leqslant l\leqslant d)$, $|\textrm{det}(V)|\asymp N^d$, and for every $1\leqslant k\leqslant q$ and $y\in B(\xi, 1/N)$
\begin{equation}
|h_k(y, v_1, \ldots, v_d)|\gtrsim N^{(k+2)d}   \nonumber
\end{equation}
and
\begin{equation}
\left\{
\begin{array}{ll}
|g_{i, i}^{(k)}(y, v_1, \ldots, v_d)|\asymp N^{k+2}   & \textrm{for $1\leqslant i\leqslant d-1$} \\
|g_{i, j}^{(k)}(y, v_1, \ldots, v_d)|\lesssim N^{k+2}              & \textrm{for $2\leqslant i\leqslant d-1$, $1\leqslant j\leqslant i-1$}\\
|g_{d, 1}^{(k)}(y, v_1, \ldots, v_d)|\asymp N^{k+2}                 \\
|g_{d, j}^{(k)}(y, v_1, \ldots, v_d)|\lesssim N^{k+1}     & \textrm{for $2\leqslant j\leqslant d$}
\end{array}. \right. \nonumber
\end{equation}

All implicit constants may depend on $q$ and $\mathcal{B}$.
\end{lemma}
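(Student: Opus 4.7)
The plan is to adapt the construction in Müller \cite{mullerII} Lemma 3 — which already supplies linearly independent $v_1,\ldots,v_d\in\mathbb{Z}^d$ with $|v_l|\asymp N$, $|\det V|\asymp N^d$, and the determinant lower bound $|h_k|\gtrsim N^{(k+2)d}$ — and to refine the choice of the $v_l$'s so that the additional entry-wise estimates hold. The algebraic engine is the iterated Euler identity arising from the degree-$1$ homogeneity of $H$: since each $\partial^\alpha H$ is homogeneous of degree $1-|\alpha|$, iterating the relation $y\cdot\nabla(\partial^\alpha H)(y) = (1-|\alpha|)\partial^\alpha H(y)$ yields, for $0\leq m\leq k$,
\[
D^{k+2}H(y)(\xi_1,\ldots,\xi_{k+2-m}, y^{\otimes m}) = (-1)^m\frac{k!}{(k-m)!}\, D^{k+2-m}H(y)(\xi_1,\ldots,\xi_{k+2-m}),
\]
with the tensor vanishing whenever more than $k$ slots are contracted with $y$. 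In particular $D^2H(y)$ has $y$ as its unique null direction (other eigenvalues $\asymp 1$), and $D^3H(y)(y,\eta,\zeta)=-D^2H(y)(\eta,\zeta)$.

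\textbf{Construction.} Take $v_d\in\mathbb{Z}^d$ approximating $N\xi/|\xi|$ with error $\lesssim 1$, so that $v_d$ is nearly parallel to $y$ for every $y\in B(\xi,1/N)$. Let $e_1(\xi),\ldots,e_{d-1}(\xi)$ be orthonormal eigenvectors of $D^2H(\xi)|_{\xi^\perp}$, chosen smoothly on a finite partition of $\mathscr{C}_1$ handling possible eigenvalue crossings. Set $\hat v_1 = \cos\theta\,\xi/|\xi| + \sin\theta\, e_1(\xi)$ for a fixed $\theta\in(0,\pi/2)$ and $\hat v_l = e_l(\xi)$ for $l=2,\ldots,d-1$, and approximate by integer vectors $v_l\in\mathbb{Z}^d$ with $|v_l - N\hat v_l|\lesssim 1$ (Dirichlet's simultaneous approximation). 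For $N$ above an allowable threshold $A_3$, this yields $|v_l|\asymp N$ and $|\det V|\asymp N^d$.

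\textbf{Entry-wise estimates.} Decompose $v_d = \alpha y + w_d$ with $w_d\perp y$, $\alpha\asymp N$, $|w_d|\lesssim 1$ (using $|y-\xi|\leq 1/N$). Expand each $g_{i,j}^{(k)} = D^{k+2}H(y)(v_1,v_i,v_j,v_d^{\otimes(k-1)})$ by the multinomial for $v_d^{\otimes(k-1)}$ and apply the Euler identity to each $y$-contraction. For $(i,j)=(d,j)$ with $j\geq 2$, the second $v_d$ supplies one extra $\alpha y$; after contraction the leading term is $\pm\alpha^k k!\,D^2H(y)(v_1,v_j)$, which is $\lesssim N^{k+1}$ because $\hat v_1$ is $D^2H(\xi)$-orthogonal to $e_j$ for $j\geq 2$ (the $\xi/|\xi|$-component is annihilated by $D^2H(\xi)$, the $e_1(\xi)$-component is orthogonal to $e_j$ in the eigenbasis). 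For $(d,1)$ the same expression reads $\pm\alpha^k k!\,D^2H(y)(v_1,v_1)\asymp N^{k+2}$, since the $e_1$-component of $\hat v_1$ contributes $\sin^2\theta\,\lambda_1\asymp 1$. For the diagonal entries with $i\leq d-1$, the leading term is $\pm\alpha^{k-1}k!\,D^3H(y)(v_1,v_i,v_i)$; the $\xi$-component of $\hat v_1$ is decisive here, since by Euler $D^3H(\xi)(\xi/|\xi|,e_i,e_i)=-\lambda_i/|\xi|\asymp -1$, giving $|g_{i,i}^{(k)}|\asymp N^{k+2}$. All subleading terms are $\lesssim N^{k+1}$ by power counting in $|w_d|$, and the generic off-diagonal upper bounds follow from $|v_l|\asymp N$.

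\textbf{Determinant, and main obstacle.} Writing $G_k = V^{T} B_k V$ for the $d\times d$ matrix $B_k$ of the symmetric bilinear form $(\eta,\zeta)\mapsto D^{k+2}H(y)(v_1,\eta,\zeta,v_d^{\otimes(k-1)})$, we have $\det G_k = (\det V)^2\det B_k$. The same expansion gives $B_k = \alpha^{k-1}(-1)^{k-1}k!\,D^3H(y)(v_1,\cdot,\cdot) + \textrm{(lower order)}$, and the matrix $D^3H(\xi)(\hat v_1,\cdot,\cdot)$ has determinant $\asymp 1$ (verified via Euler, with explicit magnitude $\asymp\sin^2\theta\cos^{d-2}\theta$ in the model case of the unit ball and persisting under the nonzero Gaussian curvature hypothesis), whence $|\det B_k|\asymp N^{kd}$ and $|h_k|\asymp N^{(k+2)d}$. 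The main difficulty is reconciling the three structural requirements — full-size diagonal entries, exceptional smallness of the last row and column, and linear independence with $|\det V|\asymp N^d$ — with a single basis valid for all $1\leq k\leq q$ and all $y\in B(\xi,1/N)$. The mixed direction $\hat v_1$ is what makes these compatible: the $\xi$-component produces diagonal non-degeneracy through Euler's identity $D^3H(\xi)(\xi,\cdot,\cdot)=-D^2H(\xi)$, while the $e_1(\xi)$-component simultaneously guarantees linear independence from $v_d$ and the vanishing $D^2H(\xi)(\hat v_1,e_j)=0$ for $j\geq 2$ responsible for the extra factor $N^{-1}$ in the last row. Uniformity of $\theta$ over $\mathscr{C}_1$ follows by compactness: for each $\xi_0\in\mathscr{C}_1$ there is an open neighborhood on which a local $\theta$ makes all the required non-vanishings hold, and a finite subcover handles all of $\mathscr{C}_1$.
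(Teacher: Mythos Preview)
Your construction is the same as the paper's: $v_d$ approximates $N\xi/|\xi|$, the vectors $v_2,\ldots,v_{d-1}$ approximate $N$ times the tangential eigenvectors of $D^2H(\xi)$, and $v_1$ approximates $N$ times a mixture of $\xi/|\xi|$ and $e_1(\xi)$. The Euler-identity bookkeeping you describe is exactly what produces the paper's explicit formulas for the entries.

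The gap is in the non-degeneracy step. Your claim that $\det D^3H(\xi)(\hat v_1,\cdot,\cdot)\asymp 1$ ``persists under the nonzero Gaussian curvature hypothesis'' is not justified: curvature controls only the eigenvalues $\lambda_i$ of $D^2H$, whereas this determinant also involves the uncontrolled third-order data $c_{ij}=D^3H(\xi)(e_1,e_i,e_j)$. In the basis $(\xi/|\xi|,e_1,\ldots,e_{d-1})$ your own Euler identity gives
\[
\det D^3H(\xi)(\hat v_1,\cdot,\cdot)=-\Bigl(\tfrac{\lambda_1\sin\theta}{|\xi|}\Bigr)^{2}\det\Bigl(-\tfrac{\lambda_i\cos\theta}{|\xi|}\,\delta_{ij}+c_{ij}\sin\theta\Bigr)_{2\leqslant i,j\leqslant d-1},
\]
and for a fixed $\theta$ the inner determinant may vanish for some $\xi$. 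The same issue hits the diagonal entries: $D^3H(\xi)(\hat v_1,e_i,e_i)=-(\cos\theta)\lambda_i/|\xi|+(\sin\theta)c_{ii}$ can vanish at a specific $\theta$, so ``the $\xi$-component is decisive'' is not automatic. Your compactness paragraph then presupposes that for each $\xi$ \emph{some} $\theta$ works, which you have not established beyond the unit-ball model.

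The paper's remedy is one observation away. Instead of a unit vector $\hat v_1$ with a fixed angle, take the unnormalized $\tilde v_1=\alpha\,\xi/|\xi|+e_1(\xi)$ with $\alpha$ a large allowable constant (equivalently, let $\cot\theta=\alpha\to\infty$). Then the inner $(d-2)\times(d-2)$ determinant above becomes a polynomial in $\alpha$ of degree $d-2$ whose leading coefficient $\prod_{i\geqslant 2}(-\lambda_i/|\xi|)$ is $\asymp 1$ \emph{precisely} by the curvature hypothesis; likewise each diagonal quantity $D^3H(\xi)(\tilde v_1,e_i,e_i)$ is linear in $\alpha$ with leading coefficient $-\lambda_i/|\xi|\asymp 1$. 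Choosing $\alpha$ large enough (uniformly over $\mathscr{C}_1$, using compactness and the bound \eqref{final-depend} on $D^3H$) forces all required non-vanishings simultaneously for every $k\leqslant q$. Since $\alpha$ is allowable, $|v_1|\asymp \alpha N\asymp N$ remains valid, and your integer approximation and perturbation to $y\in B(\xi,1/N)$ then go through unchanged.
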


\begin{proof}[Proof of Lemma \ref{keylemma}] We will essentially follow the proof of M\"{u}ller \cite{mullerII} Lemma 3 (with some minor modification) and establish these results through three steps for an arbitrarily fixed $\xi\in \mathscr{C}_1$.

{\bf Step 1.} We first choose $d$ vectors $P_l\in \mathbb{R}^d$ ($1\leqslant l \leqslant d$), in particular $P_1=\xi$, such that $|P_l|=|\xi|$ and $P_l/|\xi|$'s form an orthogonal matrix. Let $P=(P_1, \ldots, P_d)$ and $\widetilde{H}(y)=H(Py)$. $\widetilde{H}$ is positively homogeneous of degree $1$ and the eigenvalues of Hessian matrix of $\widetilde{H}$ at $e_1$ are $0$ and ($d-1$) real numbers comparable to $1$ since $D^2 \widetilde{H}(e_1)$ is similar to $D^2 H(\xi)$ up to a number $|\xi|^2$ and $|\xi|\asymp 1$.

Set $A=(\widetilde{H}_{ij}(e_1))$. $A$ is a symmetric matrix of rank $d-1$ with vanishing first row and column (due to homogeneity, cf. proof of Lemma 3 in M\"{u}ller \cite{mullerII}).
Choose a system of orthonormal eigenvectors $w_1', \ldots, w_{d-1}'$ of $A$, whose first components all vanish. Denote
the eigenvalue of $w_i'$ by $\lambda_i$ (comparable to $1$). Note that for every $\alpha>1$ the vector $w_1=w_1'+\alpha e_1$ is orthogonal to $w_j'$ for $2\leqslant j\leqslant d-1$ and satisfies $A w_1=\lambda_1 w_1'$. Denote
\begin{displaymath}
w_i=\left\{
\begin{array}{ll}
w_1'+\alpha e_1   &\quad \textrm{if $i=1$} \\
w_i'              &\quad \textrm{if $2\leqslant i\leqslant d-1$}\\
e_1              &\quad \textrm{if $i=d$}
\end{array}, \right.
\end{displaymath}
then $|w_1|\asymp \alpha$, $|w_l|=1$ ($2\leqslant l\leqslant d$), and $\textrm{det}(W)=1$ where $W=(w_1, \ldots, w_d)$. Denote $w_i=(w_{i,1}, \ldots, w_{i,d})^{t}$, $F(u_1, \ldots, u_d)=\widetilde{H}(e_1+\sum_{l=1}^{d}u_l w_l)$, and
\begin{displaymath}
b_{i,j}^{(k)}(\alpha)=\frac{\partial^{k+2}F}{\partial
u_1 \partial u_i \partial u_j \partial u_d^{k-1}}(0).
\end{displaymath}

Define $v_l^*=Pw_l$. Then $|v_1^*|\asymp \alpha$, $|v_l^*|\asymp 1$ ($2\leqslant l\leqslant d$), and $|\textrm{det}(V^*)|\asymp 1$ where $V^*=(v_1^*, \ldots, v_d^*)$. Note that $F(u_1, \ldots, u_d)=H(\xi+\sum_{l=1}^{d}u_l v_l^*)$ and $b_{i,j}^{(k)}(\alpha)=g_{i, j}^{(k)}(\xi, v_1^{*}, \ldots, v_d^{*})$.

If $1\leqslant i, j\leqslant d-1$,
\begin{displaymath}
b_{i,j}^{(k)}(0)=\sum_{m, n, s=1}^{d}\frac{\partial^{k+2}\widetilde{H}}{\partial
y_1^{k-1} \partial y_m \partial y_n \partial y_s}(e_1)w'_{1,m}w'_{i,n}w'_{j,s}\lesssim 1.
\end{displaymath}
The last inequality is due to assumption \eqref{final-depend}.

If $i=1$, $1\leqslant j\leqslant d-1$, then
\begin{displaymath}
b_{1,j}^{(k)}(\alpha)=b_{1,j}^{(k)}(0)+3\alpha (-1)^k k!\lambda_1 \delta_{1j},
\end{displaymath}
where $\delta_{ij}$ is the Kronecker notation.

If $2\leqslant i, j\leqslant d-1$, then
\begin{displaymath}
b_{i,j}^{(k)}(\alpha)=b_{i,j}^{(k)}(0)+\alpha (-1)^k k!\lambda_j \delta_{ij}.
\end{displaymath}

If $1\leqslant i\leqslant d$, $j=d$, then
\begin{displaymath}
b_{i,d}^{(k)}(\alpha)=(-1)^k k!\lambda_1 \delta_{1i}.
\end{displaymath}

Using these formulas, we get
\begin{displaymath}
\textrm{det}(b_{i,j}^{(k)}(\alpha))_{1\leqslant i, j\leqslant d}=-(k!\lambda_1)^2 \textrm{det}(b_{i,j}^{(k)}(\alpha))_{2\leqslant i, j\leqslant d-1}
\end{displaymath}
and
\begin{displaymath}
\textrm{det}(b_{i,j}^{(k)}(\alpha))_{2\leqslant i, j\leqslant d-1}=\textrm{det}(b_{i,j}^{(k)}(0)+\alpha (-1)^k k!\lambda_j \delta_{ij})_{2\leqslant i, j\leqslant d-1}.
\end{displaymath}

The last determinant is a polynomial in $\alpha$ of degree $d-2$ with leading coefficient comparable to $1$. If we fix $\alpha$ to be a sufficiently large constant (only depending on $q$ and $\mathcal{B}$), then
\begin{displaymath}
|h_k(\xi, v_1^*, \ldots, v_d^*)|=|\textrm{det}(b_{i,j}^{(k)}(\alpha))_{1\leqslant i, j\leqslant d}|\gtrsim 1 \quad \textrm{for $1\leqslant k\leqslant q$}
\end{displaymath}
and
\begin{displaymath}
\left\{
\begin{array}{ll}
|g_{i, i}^{(k)}(\xi, v_1^{*}, \ldots, v_d^{*})|\asymp 1  &\quad \textrm{for $1\leqslant i\leqslant d-1$} \\
|g_{i, j}^{(k)}(\xi, v_1^{*}, \ldots, v_d^{*})|\lesssim 1              &\quad \textrm{for $2\leqslant i\leqslant d-1$, $1\leqslant j\leqslant i-1$}\\
|g_{d, 1}^{(k)}(\xi, v_1^{*}, \ldots, v_d^{*})|\asymp 1                 \\
|g_{d, j}^{(k)}(\xi, v_1^{*}, \ldots, v_d^{*})|=0     &\quad \textrm{for $2\leqslant j\leqslant d$}
\end{array}, \right.
\end{displaymath}
where the implicit constants only depend on $q$ and $\mathcal{B}$.

{\bf Step 2.} There exist vectors $v_l^{**}\in \mathbb{Q}^d$ ($1\leqslant l\leqslant d$), whose components are all ratios of two integers with denominator $N$, such that $|v_l^{**}-v_l^ {*}|\leqslant \sqrt{d}/N$. There exists a large number $A_1$ (only depending on $q$ and $\mathcal{B}$) such that if $N\geqslant A_1$ then $|v_l^{**}|\asymp 1$ ($1\leqslant l\leqslant d$) and $|\textrm{det}(V^{**})|\asymp 1$ where $V^{**}=(v_1^{**}, \ldots, v_d^{**})$. Since
\begin{equation}
|g_{i, j}^{(k)}(\xi, v_1^{**}, \ldots, v_d^{**})-g_{i, j}^{(k)}(\xi, v_1^{*}, \ldots, v_d^{*})|\lesssim 1/N, \nonumber
\end{equation}
there exists a large number $A_2\geqslant A_1$ (only depending on $q$ and $\mathcal{B}$) such that if $N\geqslant A_2$ then
\begin{equation}
|h_k(\xi, v_1^{**}, \ldots, v_d^{**})|\gtrsim 1 \quad \textrm{for $1\leqslant k\leqslant q$}\nonumber
\end{equation}
and
\begin{displaymath}
\left\{
\begin{array}{ll}
|g_{i, i}^{(k)}(\xi, v_1^{**}, \ldots, v_d^{**})|\asymp 1   &\quad \textrm{for $1\leqslant i\leqslant d-1$} \\
|g_{i, j}^{(k)}(\xi, v_1^{**}, \ldots, v_d^{**})|\lesssim 1              &\quad \textrm{for $2\leqslant i\leqslant d-1$, $1\leqslant j\leqslant i-1$}\\
|g_{d, 1}^{(k)}(\xi, v_1^{**}, \ldots, v_d^{**})|\asymp 1                 \\
|g_{d, j}^{(k)}(\xi, v_1^{**}, \ldots, v_d^{**})|\lesssim 1/N     &\quad \textrm{for $2\leqslant j\leqslant d$}
\end{array}, \right.
\end{displaymath}
where the implicit constants only depend on $q$ and $\mathcal{B}$.

{\bf Step 3.} Let $v_l=Nv_l^{**}$. Then $v_l\in \mathbb{Z}^d\setminus \{0\}$, $|v_l|\asymp N$ ($1\leqslant l\leqslant d$), and $|\textrm{det}(V)| \asymp N^d$. Note that
\begin{displaymath}
g_{i, j}^{(k)}(\xi, v_1, \ldots, v_d)=N^{k+2} g_{i, j}^{(k)}(\xi, v_1^{**}, \ldots, v_d^{**})
\end{displaymath}
Applying the mean value theorem, we have for $y\in \mathscr{C}_1^{+}$
\begin{equation}
|g_{i, j}^{(k)}(y, v_1^{**}, \ldots, v_d^{**})-g_{i, j}^{(k)}(\xi, v_1^{**}, \ldots, v_d^{**})|\lesssim |y-\xi|. \nonumber
\end{equation}
Thus there exists a large number $A_3\geqslant A_2$ (only depending on $q$ and $\mathcal{B}$) such that if $N\geqslant A_3$ and $y\in B(\xi, 1/N)$ then the bounds for determinants and entries in the lemma are both true and the implicit constants only depend on $q$ and $\mathcal{B}$. This finishes the proof.
\end{proof}

\section{Proof of Theorem \ref{lattice}}\label{mainproof}

By a standard procedure, we can change the combinatorial problem of counting lattice points in a blown-up domain to an analytical problem. The essential issue will be reduced to the estimation of an exponential sum. In order to apply Theorem \ref{Exp-Sum}, we need to introduce a dyadic decomposition and a partition of unity.

\begin{proof}[Proof of Theorem \ref{lattice}] Assume $\rho$ is a smooth function on $\mathbb{R}^d$ with compact support, which satisfies $\int_{\mathbb{R}^d}\rho(y)dy=1$. Let $\varepsilon$ be a small positive number, $\rho_{\varepsilon}(y)=
\varepsilon^{-d}\rho(\varepsilon^{-1}y)$, and
\begin{equation}
N_{\varepsilon}(t)=\sum_{k\in \mathbb{Z}^d} \chi_{t\mathcal{B}}*\rho_{\varepsilon}(k), \nonumber
\end{equation}
where $\chi_{t\mathcal{B}}$ denotes the characteristic function of domain $t\mathcal{B}$. By the Poisson summation formula
\begin{equation}
N_{\varepsilon}(t)=t^d\sum_{k\in \mathbb{Z}^d} \hat{\chi}_{\mathcal{B}}(tk)\hat{\rho}(\varepsilon k)=\textrm{vol}(\mathcal{B}) t^d+R_{\varepsilon}(t),  \nonumber
\end{equation}
where
\begin{equation}
R_{\varepsilon}(t)=t^d\sum_{k\in \mathbb{Z}_{*}^{d}} \hat{\chi}_{\mathcal{B}}(tk)\hat{\rho}(\varepsilon k). \nonumber
\end{equation}

M\"uller proved in \cite{mullerI} that there exists a constant $C_1$ such that
\begin{equation}
N_{\varepsilon}(t-C_1\varepsilon)\leqslant \#(t\mathcal{B}\cap\mathbb{Z}^d)=\sum_{k\in \mathbb{Z}^d} \chi_{t\mathcal{B}}(k) \leqslant N_{\varepsilon}(t+C_1\varepsilon), \nonumber
\end{equation}
which implies
\begin{equation}
P_{\mathcal{B}}(t)\lesssim |R_{\varepsilon}(t+C_1\varepsilon)|+|R_{\varepsilon}(t-C_1\varepsilon)|+t^{d-1}\varepsilon. \label{basic}
\end{equation}

It suffices to estimate $R_{\varepsilon}(t)$ for any large $t$. By H\"ormander \cite{hormander} Corollary 7.7.15, we have the asymptotic
expansion
\begin{equation}
\hat{\chi}_{\mathcal{B}}(\xi)=[C K_{\xi}^{-\frac{1}{2}}e^{-2\pi iH(\xi)}+C' K_{-\xi}^{-\frac{1}{2}}e^{2\pi iH(-\xi)}]|\xi |^{-\frac{d+1}{2}}+O(|\xi|^{-\frac{d+3}{2}}),  \nonumber
\end{equation}
where $C$, $C'$ are two constants, $H(\xi)=\sup_{x\in \mathcal{B}}\langle \xi, x\rangle$, and $K_{\xi}$ is the curvature at the boundary point where the exterior normal is $\xi$. $K_{\xi}$ is smooth on $\mathbb{R}^d\setminus \{0\}$ and positively homogeneous of degree $0$. Applying this formula gives
\begin{equation}
R_{\varepsilon}(t)=CS_1+C'\widetilde{S}_1+\textrm{Error},\nonumber
\end{equation}
where
\begin{equation}
S_1=t^{\frac{d-1}{2}} \sum_{k\in \mathbb{Z}_{*}^{d}}|k|^{-\frac{d+1}{2}}K_{k}^{-\frac{1}{2}}\hat{\rho}(\varepsilon k)e(tH(k)),\nonumber
\end{equation}
\begin{equation}
\widetilde{S}_1=t^{\frac{d-1}{2}} \sum_{k\in \mathbb{Z}_{*}^{d}}|k|^{-\frac{d+1}{2}}K_{-k}^{-\frac{1}{2}}\hat{\rho}(\varepsilon k)e(-tH(-k)),\nonumber
\end{equation}
and
\begin{equation}
\textrm{Error}\lesssim t^{\frac{d-3}{2}}\sum_{k\in \mathbb{Z}_{*}^{d}} |k|^{-\frac{d+3}{2}}\hat{\rho}(\varepsilon k)\lesssim t^{\frac{d-3}{2}}\varepsilon^{-\frac{d-3}{2}}.\label{err}
\end{equation}

Since the first two sums are
similar, it suffices to estimate $S_1$. With $\mathscr{C}_1$ as defined in Section \ref{nonvanishing}, we can find a real
radial function $\psi\in C_0^{\infty}(\mathbb{R}^d)$ such
that supp$(\psi)\subset \mathscr{C}_1$, $0\leqslant \psi \leqslant 1$, and
\begin{equation}
\sum_{j=-\infty}^{\infty}\psi(\frac{y}{2^j})=1  \quad \textrm{for }y\in \mathbb{R}^d \setminus \{0\}.     \nonumber
\end{equation}
Denote
\begin{equation}
S_{1, M}=t^{\frac{d-1}{2}}\sum_{k\in \mathbb{Z}_{*}^{d}}\psi(M^{-1}k)|k|^{-\frac{d+1}{2}}K_{k}^{-\frac{1}{2}}\hat{\rho}(\varepsilon k)e(tH(k))\nonumber
\end{equation}
then $S_1=\sum_{j=0}^{\infty}S_{1, 2^j}$. It suffices to estimate $S_{1, M}$ for a fixed $M=2^j$ ($j\in \mathbb{N}_0$).

With the notation as in Section \ref{nonvanishing}, Lemma \ref{keylemma} ensures that there exists an allowable constant $A_3>0$ such that if $N\geqslant A_3$ is an integer then for every $\xi\in \mathscr{C}_1$ there exist linearly independent vectors $v_1(\xi), \ldots, v_d(\xi)\in \mathbb{Z}^d$ such that $|v_l|\asymp N$ ($1\leqslant l\leqslant d$), $|\textrm{det}(V)|\asymp N^d$, and
\begin{equation}
|h_k(y, v_1(\xi), \ldots, v_d(\xi))|\gtrsim N^{(k+2)d} \quad  \textrm{for $1\leqslant k\leqslant 3$, $y\in B(\xi, 2r)$},\nonumber
\end{equation}
where $r=1/2N$. The entries of $G_{k}(y, v_1(\xi), \ldots, v_d(\xi))$ satisfy the size estimate in Lemma \ref{keylemma}.

Since $\mathscr{C}_1$ is compact, we can find finitely many balls $\{B(\xi_i, r)\}_{i=1}^{I}$ ($\xi_i\in \mathscr{C}_1$ and $I\lesssim N^d$) and a partition of unity $\{\psi_i\}_{i=1}^{I}$ such that
\begin{enumerate}
\item  these balls have the bounded overlap property;
\item  $\mathscr{C}_1\subset \cup_{i=1}^{I}B(\xi_i, r)$;
\item  $\sum_{i}\psi_i(y)\equiv 1$ if $y\in \mathscr{C}_1$;
\item  $\psi_i\in C_0^{\infty}(B_i)$;
\item  $D^{\nu} \psi_i\lesssim N^{|\nu|}$.
\end{enumerate}
where we denote $B_i=B(\xi_i, r)$ and $B_i^*=B(\xi_i, 2r)$.

Denote
\begin{equation}
S_{1, M}^{(i)}=t^{\frac{d-1}{2}}\sum_{k\in \mathbb{Z}^d_*} U(k) e(tH(k)) \nonumber
\end{equation}
where
\begin{equation}
U(k)=\psi_i(M^{-1}k)\psi(M^{-1}k)|k|^{-\frac{d+1}{2}}K_{k}^{-\frac{1}{2}}\hat{\rho}(\varepsilon k), \nonumber
\end{equation}
then
\begin{displaymath}
S_{1, M}=\sum_{i=1}^{I}S_{1, M}^{(i)}.
\end{displaymath}
It suffices to estimate $S_{1, M}^{(i)}$ for a fixed $i$. Denote by $L$ the index of the lattice spanned by $v_1(\xi_i), \ldots, v_d(\xi_i)$ in the lattice $\mathbb{Z}^d$. Then $L=|\textrm{det}(V)|\asymp N^{d}$ and there exist vectors $b_l\in \mathbb{Z}^d$ ($1\leqslant l\leqslant L$) such that
\begin{equation}
\mathbb{Z}^d=\uplus_{l=1}^{L}(\mathbb{Z}v_1+\ldots+\mathbb{Z}v_d+b_l). \nonumber
\end{equation}
Let $N_1>0$ be an arbitrary integer $\geqslant \lceil \frac{d}{2} \rceil$. Applying above decomposition of $\mathbb{Z}^d$, for any $k\in \mathbb{Z}^d$ we can write $k=\sum_{s=1}^{d} m_s v_s+b_l$ where $m_s\in \mathbb{Z}$ ($1\leqslant s\leqslant d$). Hence
\begin{align}
S^{(i)}_{1, M} &=t^{\frac{d-1}{2}}\sum_{l=1}^{L}\sum_{m\in \mathbb{Z}^d}U(\sum_{s=1}^{d} v_s m_s+b_l) e(tH(\sum_{s=1}^{d} v_s m_s+b_l)) \nonumber\\
      &=t^{\frac{d-1}{2}}M^{-\frac{d+1}{2}}(1+|M\varepsilon|)^{-N_1}\sum_{l=1}^{L}S_l(T, \delta M; G, F),\nonumber
\end{align}
where $T=tM$, $\delta=N^{-1}$,
\begin{equation}
G(x)=M^{\frac{d+1}{2}}(1+|M\varepsilon|)^{N_1} U(M \sum_{s=1}^{d} \delta v_s x_s+b_l), \nonumber
\end{equation}
and
\begin{equation}
F(x)=H(\sum_{s=1}^{d} \delta v_s x_s+b_l/M). \nonumber
\end{equation}
We consider $F$ restricted to the convex domain
\begin{equation}
\Omega=\{x\in \mathbb{R}^d : \sum_{s=1}^{d} \delta v_s x_s+b_l/M \in B_i^* \}. \label{hh}
\end{equation}
If $\delta^{-1}<M$, $\Omega\subset c_0 B(0, 1)$ for an allowable constant $c_0$. The support of $G$ satisfies
\begin{equation}
\textrm{supp}(G)\subset \{x\in \mathbb{R}^d : \sum_{s=1}^{d} \delta v_s x_s+b_l/M \in \overline{B_i}\cap \mathscr{C}_1 \}\subset \Omega, \label{ii}
\end{equation}
and
\begin{equation}
\textrm{dist}(\textrm{supp}(G), \Omega^{c})\geqslant c'_1 \delta, \nonumber
\end{equation}
where $c'_1$ is an allowable constant. Note that
\begin{equation}
D^{\nu}U \lesssim \delta^{-|\nu|}M^{-\frac{d+1}{2}-|\nu|}(1+|M\varepsilon|)^{-N_1},  \nonumber
\end{equation}
and for all $x\in \Omega$, $1\leqslant i, j\leqslant d$, and $1\leqslant k\leqslant 3$
\begin{equation}
\frac{\partial^{k+2}F}{\partial x_1 \partial x_i \partial x_j \partial x_d^{k-1}}(x)=\delta^{k+2} g_{i, j}^{(k)}\big(\sum_{s=1}^{d} \delta v_s x_s+b_l/M, v_1(\xi_i), \ldots, v_d(\xi_i)\big),  \nonumber
\end{equation}
where $g_{i, j}^{(k)}$'s are as defined in Section \ref{nonvanishing}. It is not hard to check that assumptions of Theorem \ref{Exp-Sum} are satisfied.

If $d\geqslant 4$, we apply to $S_l(T, \delta M; G, F)$ Theorem \ref{Exp-Sum} with $q=1$, which determines the size of $\delta$, hence that of $N$. Note that $\delta$ is allowable, we will not write it explicitly in various bounds below. If $t\geqslant M\geqslant t^{1-\frac{2}{d}}$, the inequality $M>\delta^{-1}$ and restrictions of Theorem \ref{Exp-Sum} are both satisfied, thus
\begin{equation}
S_l(T, \delta M; G, F)\lesssim t^{\frac{d^2}{2(d^2+2d+4)}}M^{d-\frac{2d^2+d}{2(d^2+2d+4)}}, \nonumber
\end{equation}
which leads to
\begin{equation}
S_{1, M}=\sum S_{1, M}^{(i)}\lesssim t^{\frac{d-1}{2}+\frac{d^2}{2(d^2+2d+4)}}M^{\frac{d-1}{2}-\frac{2d^2+d}{2(d^2+2d+4)}} (1+|M\varepsilon|)^{-N_1}. \nonumber
\end{equation}

We split $S_1$ into three parts as follows:
\begin{equation}
S_1=\sum_{j=0}^{\infty}S_{1, 2^j}=\sum_{2^j< t^{1-\frac{2}{d}}}+\sum_{t^{1-\frac{2}{d}}\leqslant 2^j \leqslant t}+\sum_{2^j>t}S_{1, 2^j}.\nonumber
\end{equation}
The second sum is bounded by
\begin{equation}
\sum_{t^{1-\frac{2}{d}}\leqslant 2^j \leqslant t} t^{\frac{d-1}{2}+\frac{d^2}{2(d^2+2d+4)}}(2^j)^{\frac{d-1}{2}-\frac{2d^2+d}{2(d^2+2d+4)}} (1+|2^j \varepsilon|)^{-N_1}\nonumber
\end{equation}
\begin{equation}
   \lesssim t^{\frac{d-1}{2}+\frac{d^2}{2(d^2+2d+4)}}\varepsilon^{-\frac{d-1}{2}+\frac{2d^2+d}{2(d^2+2d+4)}}, \label{second-sum}
\end{equation}
while the first and third, by the trivial estimate, are bounded by $t^{d-2+1/d}$ and $1$ respectively. This finishes the estimate of $S_1$.

Note that the bound \eqref{err} for the Error term is smaller than \eqref{second-sum}, hence we get the bound for $R_{\varepsilon}(t)$. Since $t\pm C_1\varepsilon\asymp t$, we get the bound for $R_{\varepsilon}(t\pm C_1\varepsilon)$. Plugging these bounds in \eqref{basic} yields
\begin{equation}
P_{\mathcal{B}}(t)\lesssim  t^{d-2+\frac{1}{d}}+t^{\frac{d-1}{2}+\frac{d^2}{2(d^2+2d+4)}}\varepsilon^{-\frac{d-1}{2}+\frac{2d^2+d}{2(d^2+2d+4)}}
+t^{d-1}\varepsilon\nonumber
\end{equation}

Balancing the second and third terms yields
\begin{equation}
\varepsilon=t^{-\frac{d^3+2d-4}{d^3+d^2+5d+4}}.\nonumber
\end{equation}
With this choice of $\varepsilon$, the first term is smaller than the third one. Hence for $d\geqslant 4$
\begin{equation}
P_{\mathcal{B}}(t)\lesssim t^{d-2+\beta(d)}, \nonumber
\end{equation}
where $\beta(d)=(d^2+3d+8)/(d^3+d^2+5d+4)$.

If $d=3$, applying Theorem \ref{Exp-Sum} with $q=2$ yields $\beta(3)=73/158$. We omit the calculation since it is similar with above argument.
\end{proof}

{\it Remark:} To prove our exponent $\beta(d)$ for $d\geqslant 4$, we use the estimate of exponential sums obtained by using a ABAB-process (see Theorem \ref{Exp-Sum}). If we use more A- and B-processes we may further improve it at the cost of more technical difficulties. For example, an application of ABABAB-process may improve the exponent $\beta(d)$ by $1/d^3$.

\appendix
\section{Inverse function theorem}

We give a quantitative version of inverse function theorem below. We omit its proof since it is routine to prove it if we follow the proof in Rudin \cite{rudin}.

\begin{lemma}\label{app1}
Suppose $f$ is a $C^{(2)}$ mapping from an open set $\Omega\subset\mathbb{R}^d$
into $\mathbb{R}^d$ and $b=f(a)$ for some $a\in \Omega$. Assume $|\textrm{det} (Df(a))|$ $\geqslant$ $c$ and for any $x\in\Omega$
\begin{equation}
|D^{\alpha} f_i(x)|\leqslant C
\quad \quad \textrm{for $|\alpha|\leqslant 2$, $1\leqslant i\leqslant d$}.  \nonumber
\end{equation}
If $r_0\leqslant \sup\{r>0: B(a, r)\subset \Omega\}$, then $f$ is bijective from $B(a, r_1)$ to an open set containing $B(b, r_2)$ where
\begin{equation}
r_1=\min\{\frac{c}{2d^{7/2} (d-1)! C^d}, r_0\},  \nonumber
\end{equation}
\begin{equation}
r_2=\frac{c}{4d^{3/2}(d-1)!C^{d-1}}r_1. \nonumber
\end{equation}
The inverse function $f^{-1}$ is also a $C^{(2)}$ mapping.
\end{lemma}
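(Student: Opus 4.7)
The plan is to follow the classical proof of the inverse function theorem, as in Rudin, while tracking constants explicitly in order to obtain the stated radii. Set $A = Df(a)$. By hypothesis $|\det A| \geqslant c$ and each entry of $A$ is at most $C$, so Cramer's rule bounds every entry of $A^{-1}$ by $(d-1)!\,C^{d-1}/c$; converting this entrywise bound to an operator-norm bound (via the Frobenius norm) gives $\|A^{-1}\| \lesssim d(d-1)!\,C^{d-1}/c$. Similarly, the hypothesis on $D^{\alpha}f_i$ with $|\alpha|=2$ implies that each first partial $\partial_j f_i$ is $\sqrt{d}\,C$-Lipschitz on $\Omega$, hence $\|Df(x) - Df(y)\| \leqslant d^{3/2}C\,|x-y|$.

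The central tool is the auxiliary map $\varphi_y(x) = x + A^{-1}(y - f(x))$, whose fixed points in $\Omega$ correspond exactly to solutions of $f(x)=y$. Its derivative $D\varphi_y(x) = A^{-1}(Df(a) - Df(x))$ then satisfies $\|D\varphi_y(x)\| \leqslant K|x-a|$ for an explicit constant $K$ of the shape $d^{5/2}(d-1)!\,C^d/c$, so on any ball $B(a,r_1)$ with $r_1 \leqslant r_0$ and $K r_1 \leqslant 1/2$ the map $\varphi_y$ is a $1/2$-Lipschitz contraction. Choosing $r_1$ at the resulting threshold (and absorbing into the constant the extra factor of $\sqrt{d}$ that comes from passing between Euclidean and operator norms in the mean-value step) produces the stated expression for $r_1$.

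Next I verify that for every $y \in B(b,r_2)$ with $r_2$ as stated, $\varphi_y$ maps $\overline{B}(a,r_1)$ into itself. This follows from $|\varphi_y(x) - a| \leqslant |\varphi_y(x) - \varphi_y(a)| + |\varphi_y(a) - a| \leqslant (1/2)|x-a| + \|A^{-1}\|\cdot|y-b|$, provided $r_2 \leqslant r_1/(2\|A^{-1}\|)$; inserting the bound on $\|A^{-1}\|$ reproduces the stated $r_2$. The Banach fixed-point theorem then furnishes, for each such $y$, a unique $x \in B(a,r_1)$ with $f(x)=y$, which gives the asserted bijectivity of $f$ from $B(a,r_1)$ onto an open set containing $B(b,r_2)$; injectivity on $B(a,r_1)$ is immediate from applying the same contraction estimate to $\varphi_{f(x_1)}$ at two would-be preimages.

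Finally, for $C^{2}$ regularity of $f^{-1}$: the inequality $\|I - A^{-1}Df(x)\| \leqslant 1/2$ on $B(a,r_1)$ forces $Df(x)$ to be invertible throughout $B(a,r_1)$ with a uniform bound on $\|Df(x)^{-1}\|$, so the standard argument yields differentiability of $f^{-1}$ together with $Df^{-1}(y) = (Df(f^{-1}(y)))^{-1}$; since matrix inversion is smooth on $GL(d,\mathbb{R})$ and $f \in C^{2}$, this identity propagates $C^{2}$ regularity to $f^{-1}$. The only genuine difficulty, in line with the remark in the text that the proof is routine, is constant bookkeeping: matching the precise powers of $d$ and the $(d-1)!$ in the stated formulas for $r_1$ and $r_2$ requires maintaining one consistent norm convention throughout and carefully accounting for every place where a factor of $\sqrt{d}$ or $d$ is introduced when passing between entrywise, Frobenius, and operator norms.
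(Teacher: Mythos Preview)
Your proposal is correct and follows exactly the approach the paper itself suggests: the paper omits the proof entirely, saying only that ``it is routine to prove it if we follow the proof in Rudin,'' and you have carried out precisely that contraction-mapping argument with explicit constant tracking. The only caveat is the one you already flag---the precise powers of $d$ require careful norm bookkeeping---but since the stated $r_1$ and $r_2$ are, if anything, slightly smaller than the thresholds your estimates produce, the argument goes through.
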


{\it Remark:} Note that $r_2$ is linear in $r_1$. If $f$ is bijective from $B(a, r_1)$ to an open set containing $B(b, r_2)$, then for any $r'_1\leqslant r_1$, we can find the corresponding $r'_2$ such that $f$ is bijective from $B(a, r'_1)$ to an open set containing $B(b, r'_2)$.

\subsection*{Acknowledgments}
The subject of this paper was suggested by Professor Andreas Seeger. I would like to express my gratitude to him for his valuable advice and great help during the work.

\end{document}